\numberwithin{equation}{section}
\def \be{\begin{equs}}
\def \ee{\end{equs}}
\def \P{\mathbb{P}}
\def \E{\mathbb{E}}
\newcommand \TV{\mathrm{TV}}
\def \tmix{\tau_{\mathrm{mix}}}
\def \ttmix{\widetilde{\tau}_{\mathrm{mix}}}
\def \TV{\mathrm{TV}}
\def \var{\mathrm{Var}}
\def \KMH{K^{(\mathrm{MH})}}
\def \PMH{\pi^{(\mathrm{MH})}}
\newcommand \KS[1]{K^{(\mathrm{Simp-MH})}_{#1}}
\newcommand \KSW[2]{K^{(\mathrm{Simp-Wide})}_{#1, #2}}
\newcommand \KMHW[1]{K^{(\mathrm{MH-Wide})}_{#1}}
\newcommand \PMHW[1]{\pi^{(\mathrm{MH-Wide})}_{#1}}
\newcommand \KMHI[1]{K^{(\mathrm{Lim})}_{#1}}
\newcommand \PMHI[1]{\pi^{(\mathrm{Lim})}_{#1}}
\newtheorem{theorem}{Theorem}[section]
\newtheorem{lemma}[theorem]{Lemma}
\newtheorem{assumption}[theorem]{Assumption}
\theoremstyle{plain}
\newtheorem{thm}{Theorem}
\newtheorem*{thm-non}{Theorem}
\newtheorem{cor}[thm]{Corollary}
\theoremstyle{definition}
\newtheorem{example}[theorem]{Example}
\newtheorem{remark}[theorem]{Remark}
\begin{document}

\title[Ergodicity of Approximate Algorithms]
{Ergodicity of Approximate MCMC Chains with applications to Large Data Sets}


\author{Natesh S. Pillai$^{\ddag}$}
\thanks{$^{\ddag}$pillai@fas.harvard.edu, 
   Department of Statistics
    Harvard University, 1 Oxford Street, Cambridge
    MA 02138, USA}

\author{Aaron Smith$^{\sharp}$}
\thanks{$^{\sharp}$smith.aaron.matthew@gmail.com, 
   Department of Mathematics and Statistics
University of Ottawa, 585 King Edward Drive, Ottawa
ON K1N 7N5, Canada}

\maketitle
\begin{abstract}
In many modern applications, difficulty in evaluating the posterior density makes performing even a single MCMC step slow. This difficulty can be caused by intractable likelihood functions, but also appears for routine problems with large data sets. Many researchers have responded by running approximate versions of MCMC algorithms. In this note, we develop quantitative bounds for showing the ergodicity of these approximate samplers. We then use these bounds to study the bias-variance trade-off of approximate MCMC algorithms. We apply our results to simple versions of recently proposed algorithms, including a variant of the ``austerity" framework of Korratikara et al.
\end{abstract}







\section{Introduction}
Markov chain Monte Carlo (MCMC) sampling is an indispensable tool for Bayesian computation. Most Metropolis-Hastings samplers require full evaluation of the posterior at two points at every step; some MCMC samplers require even more information, such as gradients of the likelihood function. In many modern applications, the computational cost of this full evaluation of the likelihood function can be prohibitively large. For instance, a full likelihood evaluation might involve processing a massive amount data, or computing the solution of a partial differential equation representing an underlying physical phenomenon. The result is that the inferential performance of naive implementations of MCMC algorithms can actually deteriorate as the amount of data grows, unless available computational resources grow even more quickly. The same problem has been noted for other techniques that are popular in computational statistics (see \textit{e.g.,} \cite{Jord13, SVA15} for broader discussions of this problem in a similar spirit, outside of the context of MCMC). As mentioned in those papers, it might be possible to avoid actual loss of performance if we are aware that it is a possibility. However, it is much harder to ensure that data is being used \textit{efficiently} under severe computational constraints. \par 

Simulating a Markov chain that merely approximates the desired MCMC dynamics is an increasingly common approach to circumvent this difficulty. These approximations often rely on estimating, rather than evaluating, the posterior distribution of interest - for example, doing so based on a subsample of the available data \cite{Beau03, OBBEM00, WeTe11, KCW13, BDC14, quiroz2014speeding, AFEB14, maire2015light}. While many approximate MCMC methods seem to be very successful in practice, they do not have the same convergence guarantees as standard MCMC samplers. In this paper, we present some general convergence results for such `approximate' MCMC algorithms and discuss their applications to some recently proposed algorithms. Our results give quantitative bounds on the convergence in distribution of the Markov chain as well as convergence of finite samples drawn from the Markov chain. These bounds allow us to provide advice on how to choose parameters for various approximation schemes. As a consequence, they also give modest conditions under which certain approximation schemes are more efficient than their underlying MCMC algorithms. This result is unsurprising, but it is quite hard to prove for many existing approximation schemes for reasons discussed at the start of Section \ref{SecAusterity}. While our applications focus on the problems posed by large data sets, our theoretical bounds are also relevant to MCMC samplers targetting intractable likelihoods; see \cite{MCPS13} for applications of related ideas in that context. \par 

\subsection{Our Contribution}
Our paper has three main contributions. The first is providing a variety of robust ergodicity results for perturbed Markov chains, comparing the  mixing properties and Monte Carlo errors of a perturbed chain to its base chain. The second is an application of these bounds to certain `interpolating' chains in order to obtain a bias-variance tradeoff inequality. This inequality provides some advice on how to choose the best approximate sampler for a given computational budget. In particular, it gives sufficient conditions under which particular approximate MCMC samplers give smaller Monte Carlo errors than the `correct' Metropolis-Hastings dynamics.  The third is a discussion of the limits of our perturbation-based analysis for studying the tradeoff between computational and statistical efficiency for Markov chains, with a focus on some `obvious' facts that cannot be proved in this framework.   \par

In the first part of the paper, we are especially interested in how approximations perform outside of the simplest setting of uniformly good approximations of uniformly ergodic Markov chains (see \textit{e.g.}, \cite{Mitr05} for very good bounds in that setting;  heuristic arguments presented in \cite{KCW13} and \cite{WeTe11} implicitly make such assumptions). It is well known that some approximation schemes that have been proposed in the past can fail badly even in innocuous settings when the approximations are not uniformly good (see Example 17 in \cite{MCPS13} for a sampler that fails to converge to a two-valued density on $[0,1]$ and Theorem 1 of \cite{LeLa12} for one that has difficulty sampling from a Gaussian). Similarly, even uniformly good approximations of chains without uniform ergodicity can fail to inherit good convergence properties. To see this, fix $\epsilon > 0$ and consider simple random walk on $\mathbb{N}$ as a uniformly good approximation of simple random walk on $\mathbb{N}$ with drift $| \frac{\epsilon}{\log(n)}|$ towards 0 at point $n \geq 2$. The latter chain has reasonable convergence properties; the former does not even have a stationary measure.   
\subsection{Guide to the Paper}
We begin by setting up some notation in Section \ref{SecNotation}, and prove our general perturbation bounds in Section \ref{SecConvBounds}. We apply our bounds to the \textit{austerity} framework of \cite{KCW13} in Section \ref{SecAusterity}. This application begins with an introduction to the austerity framework and some definitions related to the idea of `computational complexity' of MCMC. We continue by providing a general bias-variance tradeoff result in this framework. This bound provides sufficient conditions under which an approximate MCMC kernel is better than the true Metropolis-Hastings kernel. In Sections \ref{SecCompChoice} and \ref{SecJust} we note that the perturbation results in Section \ref{SecConvBounds} give very poor bounds if applied directly to the true Metropolis-Hastings dynamics, and introduce interpolating chains and new perturbation bounds that allow us to avoid this problem. In Section \ref{SecBigMisc}, we discuss the sharpness of our results and point out some interesting phenomena that seem difficult to study via perturbation analysis.

\subsection{Related Literature}
The earlier papers \cite{KCW13, Mitr05, FHL13} also give abstract convergence results for perturbations of Markov chains, though the first two primarily restrict their attention to uniformly ergodic chains and the latter does not give the required quantitative bounds for the types of comparisons we do in this paper. 

Since writing our first version of this paper, a number of other preprints and articles studying the perturbation theory of Markov chains and applications to `big data' have been released. This includes \cite{BDC14, AFEB14, quiroz2014speeding, zhu2014big,friel2015exploiting,rudolf2015perturbation,maire2015light,bardenet2015markov,chen2014sublinear,chen2015subsampling,green2015bayesian,johndrow2015approx}, the latter 9 of which refer to our earlier draft. We briefly discuss some relationships between these papers and our work.

In \cite{BDC14,AFEB14}, the authors discuss or introduce various approximate MCMC algorithms and prove that they are in fact small perturbations of the `true' MCMC dynamics under various conditions. This justifies the use of perturbation analysis to bound the bias of these approximate MCMC samplers. However, when applied to subsampling MCMC algorithms, the bounds in both papers typically apply only when the average subsample size $n$ is on the same order as the total amount of data $N$. \cite{quiroz2014speeding} introduces another subsampling MCMC algorithm and gives the first bounds on the decay rate of the bias of their algorithm that depends only on the subsample size $n$, not the total amount of data $N$. However, these bounds do not give an explicit decay rate in terms of $n$. The excellent survey article on approximate MCMC \cite{bardenet2015markov} also provides arguments on the convergence of subsampling MCMC algorithms when $n \ll N$, among many other results.\par 
The recent article \cite{johndrow2015approx} introduces several new approximate MCMC algorithms, and also has some focus on testing when their approximate MCMC algorithms are `better' than the default MCMC methods. This is an important aspect that we do not address in our paper. Understanding this issue for a wide class of algorithms in the key next step. The authors in \cite{johndrow2015approx} provide a criterion that is very similar to the criterion introduced in Section \ref{SubsecMeasCompStatEff} of this paper; but their criteria makes sense even in the regime $n \ll N$.\par \cite{rudolf2015perturbation} gives bounds on convergence of perturbed Markov chains under very general conditions, the main subject of our Section \ref{SecConvBounds}. Some of their bounds, like some of ours, are based on the `curvature' of the original Markov chain.  However, they consider many more technical conditions than we do.The other papers cited are also about approximate MCMC, but have core messages that do not greatly overlap with ours. We emphasize that some of these subsampling algorithms (\textit{e.g.,} \cite{maire2015light, chen2014sublinear}) do not obviously require $n \approx N$ in order to be useful.

Of these papers, we feel that results in \cite{quiroz2014speeding} are closest in spirit to our conclusions.  Our main purpose in writing this note was to show that subsampling MCMC schemes can give much smaller Monte Carlo errors than `correct' Metropolis-Hastings schemes, even when the amount of data $N$ is very large to the total computational resources available. Among other contributions, \cite{quiroz2014speeding} showed that this was plausible by bounding the bias of subsampling MCMC in a way that depended only on the subsample size $n$, not the total amount of data available $N$. They also discussed notions of computational complexity that are similar to ours. Our analysis differs from theirs in a number of ways. Most obviously, the bias bounds in \cite{quiroz2014speeding} were based on the pseudomarginal algorithm; our bounds apply to approximate chains that are not pseudomarginal. Technically, our bounds are obtained via perturbation theory, and we introduce new `interpolating' chains that allow the powerful perturbation techniques to be applied to the small-$n$, large-$N$ regime. Finally, we bound both the bias and the mixing properties of approximate chains, while \cite{quiroz2014speeding}  focuses on the bias.\par
Finally, we compare and contrast our results to that of \cite{AFEB14}.  Most of the perturbation bounds in \cite{AFEB14} are closely related to 
those in \cite{Mitr05}. Although our general bounds are similar in spirit to \cite{AFEB14}, our arguments are quite different and there are situations where our bounds apply and theirs do not (and vice versa). See Remark \ref{RemarkCompFriel} for a longer discussion on the relationship between our results and that of \cite{Mitr05}.

\section{Notation} \label{SecNotation}
For a random variable $X$ and measure $\mu$, we write $X \sim \mu$ or $\mathcal{L}(X) = \mu$ to mean that $X$ is distributed according to $\mu$. $\mathrm{Unif}(A)$ denotes the uniform or Haar distribution on the set $A$ as appropriate. We will write $f = O(g)$ to mean that there exists a constant $C > 0$ so that $f(x) \leq C g(x)$. We also write $f = o(g)$ if $\lim_{x \rightarrow \infty} \frac{f(x)}{g(x)} = 0$.  Throughout the paper, the letter $C$ will denote a generic positive constant.

Our convergence results, like many in the Markov chain literature, are described in terms of the \textit{Wasserstein distance}. For a pair of measures $\mu, \nu$ on a Polish space $(\Omega, d)$, let $\Pi(\mu,\nu)$ be the set of all couplings of $\mu$ and $\nu$. Then the Wasserstein distance between $\mu, \nu$ is given by

\be \label{EqDefWass}
W_{d}(\mu,\nu) = \inf_{\zeta \in \Pi(\mu,\nu)} \int_{x,y \in \Omega} d(x,y) \zeta(dx,dy).
\ee

The \textit{Total Variation} distance between two measures $\mu, \nu$ on a probability space $(\Omega, \mathcal{F}, \P)$ is given by
\be 
\| \mu - \nu \|_{\TV} = \sup_{A \in \mathcal{F}} | \mu(A) - \nu(A) |.
\ee 
This can be viewed as a special case of the Wasserstein distance, obtained by setting $d(x,y) = \textbf{1}_{x \neq y}$.

We study the convergence of Markov chains through the notions of mixing times, curvature and drift functions. Recall that the \textit{mixing time} of a Markov chain $\{X_{t}\}_{t \geq 0}$ with kernel $K$ and stationary distribution $\pi$ on state space $\Omega$ is given by 
\be 
\tmix = \sup_{X_{0} = x \in \Omega} \inf \{ t \geq 0 \, : \, \| \mathcal{L}(X_{t}) - \pi \|_{\TV} < \frac{1}{4} \}. 
\ee 

We follow \cite{Olli09} in defining the \textit{Ricci curvature} of a transition kernel $K$ on $(\Omega, d)$ by
\be
\kappa(x,y) = 1 - \frac{W_{d}(K(x, \cdot), K(y, \cdot))}{d(x,y)},
\ee
and the curvature of the entire chain is defined to be
\be
\kappa = \inf_{x,y \in \Omega} \kappa(x,y).
\ee

A positive curvature for a power $K^{s}$ of $K$ implies that $K$ has good mixing properties. For example, if the curvature of $K^{s}$ with respect to the metric $d(x,y) = \textbf{1}_{x \neq y}$ is $\kappa$, it is straightforward to check that the mixing time of $K$ is at most $\lceil \frac{s \, \log(4)}{\log( (1 - \kappa)^{-1})} \rceil \approx \frac{s}{\kappa} \log(4)$.
It is worth noting that in many cases of interest, it is sufficient to calculate $\kappa(x,y)$ for $d(x,y)$ `small'; see, \textit{e.g.}, Prop 19 of \cite{Olli09}. 

Following \cite{JoOl10}, we also define the \textit{eccentricity} of a point $x \in \Omega$ by:
\be
E(x) = \int_{\Omega} d(x,y) \pi(dy).
\ee

Finally, we say that a Markov chain $\{ X_{t} \}_{t \geq 0}$ with kernel $K$ satisfies a \textit{drift condition} with \textit{Lyapunov function} $V$ and constants $a,b,\ell$ if 
\be \label{EqLyapunovDef}
\E[V(X_{t+\ell}) | X_{t}] \leq (1 - a) V(X_{t}) + b.
\ee 
Most of the proofs are given in an Appendix.

\section{Technical Results} \label{SecConvBounds}

Throughout this section, we study pairs of Markov chains $\{ X_{t} \}_{t \geq 0}$, $\{ \tilde{X}_{t} \}_{t \geq 0}$ with transition kernels $K, \tilde{K}$ and stationary distributions $\pi, \tilde{\pi}$ on a Polish space $(\Omega, d)$. Throughout this section and the rest of the paper, $K$ will generally refer to a generic transition kernel that we would like to simulate from, and $\tilde{K}$ will generally refer to an approximation of $K$. 

\subsection{Convergence of Chains} \label{SubsecConvIndPoints}

We begin with a general mixing estimate that most of our bounds will follow from. First, an assumption:

\begin{assumption} [Drift-Like Condition] \label{AssDriftLike}
Let $K$ be a kernel on a Polish space $(\Omega,d)$ with stationary distribution $\pi$, let $\mathcal{X} \subset \Omega$ and let $p \in \mathcal{X}$. Let $\{ Z_{t} \}_{t \geq 0}$ be a Markov chain that evolves according to $K$. Say that $K$ satisfies a \textit{drift-like condition} with respect to $(\mathcal{X},p)$ if there exist constants $0 < \beta < 1$ and $\mathcal{B}, \mathcal{C}, \mathcal{D}, c_{p} < \infty$ and a positive function $L \, : \, \Omega \mapsto \mathbb{R}^{+}$ so that 
\be \label{IneqThreePart}
\E[L(Z_{t+1}) \vert Z_{t}] &\leq (1 - \beta) L(Z_{t}) + \mathcal{B} \\
\sup_{x \in \mathcal{X}} \E[L(Z_{t+1}) \vert Z_{t} = x, Z_{t+1} \notin \mathcal{X}] &\leq \mathcal{C}. \\
\E_{\pi}[L(X) \vert X \notin \mathcal{X}] &\leq \mathcal{D}.
\ee 
and 
\be 
d(x,p) \leq L(x) + c_{p} 
\ee 
for all $x \in \Omega$.
\end{assumption}

\begin{remark}
This assumption is generally easy to check using the same calculations that one uses to establish a drift condition in the usual sense (see inequality \eqref{EqLyapunovDef}). Indeed, as long as the Lyapunov function is not pathological, this assumption will be implied by the usual drift condition.
\end{remark}

\begin{lemma}\label{LemmaWassCont}
Assume that $K$ has eccentricity $E(x) < \infty$ with respect to $\pi$. Assume that there exists some set $\mathcal{X} \subset \Omega$ so that
\be  \label{IneqSimpleWassContraction}
\sup_{x,y \in \mathcal{X}} \frac{W_{d}( K(x,\cdot), K(y,\cdot))}{d(x,y)} \leq  (1 - \alpha)
\ee 
and that there exists some $0 < \delta < \infty$ so that
\be \label{IneqKernCloseWassCont}
\sup_{x \in \mathcal{X}} W_{d}( \tilde{K}(x,\cdot), K(x,\cdot))< \delta.
\ee 
Assume that $K$, $\tilde{K}$ satisfy assumption \ref{AssDriftLike} with respect to the same $(\mathcal{X},p)$, and with the same constants. Then, for any initial point $\tilde{X}_{0} = x \in \Omega$,
\be \label{IneqAbsCoupResTwo}
W_{d}(\mathcal{L}(\tilde{X}_{T}), &\pi ) \leq \frac{\delta}{\alpha} + (1-\alpha)^{T} E(x) + \left( 2 \frac{\mathcal{B} }{\beta} + (1 - \beta)^{T} (L(x) + \mathcal{D}) + 2 c_{p} \right) \pi(\mathcal{X}^{c})  \\
&+  2 \left( 1 - \P[\{ X_{t} \}_{t=1}^{T-1} \cup \{ \tilde{X}_{t} \}_{t =1}^{T-1} \subset \mathcal{X}] \right) \left(  \mathcal{C}  + \frac{\mathcal{B} }{\beta} + c_{p} \right).
\ee 
\end{lemma}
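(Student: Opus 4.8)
The plan is to bound $W_{d}(\mathcal{L}(\tilde{X}_{T}),\pi)$ by coupling. Run the approximate chain $\{\tilde{X}_{t}\}$ from $\tilde{X}_{0}=x$ alongside a copy $\{X_{t}\}$ of the true chain started from stationarity, $X_{0}\sim\pi$, so that $X_{T}\sim\pi$ and hence $W_{d}(\mathcal{L}(\tilde{X}_{T}),\pi)\leq\E[d(X_{T},\tilde{X}_{T})]$ for any coupling. I would build the coupling greedily: whenever $X_{t},\tilde{X}_{t}\in\mathcal{X}$, use the optimal coupling of $K(X_{t},\cdot)$ and $\tilde{K}(\tilde{X}_{t},\cdot)$, which by the triangle inequality together with \eqref{IneqSimpleWassContraction} and \eqref{IneqKernCloseWassCont} satisfies $\E[d(X_{t+1},\tilde{X}_{t+1})\mid\mathcal{F}_{t}]\leq(1-\alpha)d(X_{t},\tilde{X}_{t})+\delta$; otherwise evolve the two chains independently. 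The rest is bookkeeping on the regimes of this coupling.

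Partition the space as $\{X_{0}\notin\mathcal{X}\}\,\sqcup\,\mathcal{G}\,\sqcup\,\mathcal{E}$, where $\mathcal{G}$ is the event that $X_{t},\tilde{X}_{t}\in\mathcal{X}$ for all $t\in\{0,\dots,T-1\}$ and $\mathcal{E}$ is what remains (so on $\mathcal{E}$ one has $X_{0}\in\mathcal{X}$, but some chain leaves $\mathcal{X}$ at a time in $\{1,\dots,T-1\}$). Then $\P[\{X_{0}\notin\mathcal{X}\}]=\pi(\mathcal{X}^{c})$, and since $\mathcal{E}$ is contained in the complement of $\{\{X_{t}\}_{t=1}^{T-1}\cup\{\tilde{X}_{t}\}_{t=1}^{T-1}\subset\mathcal{X}\}$ we get $\P[\mathcal{E}]\leq 1-\P[\{X_{t}\}_{t=1}^{T-1}\cup\{\tilde{X}_{t}\}_{t=1}^{T-1}\subset\mathcal{X}]$. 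I then estimate $\E[d(X_{T},\tilde{X}_{T})\mathbf{1}_{(\cdot)}]$ on each piece. (If $x\notin\mathcal{X}$ then $\mathcal{G}=\varnothing$ and a minor variant is needed, handled by the same crude estimates used below for $\{X_{0}\notin\mathcal{X}\}$.)

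On $\mathcal{G}$ the contraction estimate holds at every step $0,\dots,T-1$; keeping the indicator ``both chains have stayed in $\mathcal{X}$ so far'' attached (it is nonincreasing in $t$) and iterating gives $\E[d(X_{T},\tilde{X}_{T})\mathbf{1}_{\mathcal{G}}]\leq(1-\alpha)^{T}\E[d(X_{0},x)]+\delta\sum_{j=0}^{T-1}(1-\alpha)^{j}\leq(1-\alpha)^{T}E(x)+\frac{\delta}{\alpha}$, since $\E[d(X_{0},x)]=E(x)$ as $X_{0}\sim\pi$; this is the first two terms of \eqref{IneqAbsCoupResTwo}. On $\{X_{0}\notin\mathcal{X}\}$ the two chains are independent, so from $d(X_{T},\tilde{X}_{T})\leq L(X_{T})+L(\tilde{X}_{T})+2c_{p}$: iterating the first line of \eqref{IneqThreePart} gives $\E[L(\tilde{X}_{T})]\leq(1-\beta)^{T}L(x)+\frac{\mathcal{B}}{\beta}$, and the third line of \eqref{IneqThreePart} gives $\E[L(X_{0})\mid X_{0}\notin\mathcal{X}]\leq\mathcal{D}$ hence $\E[L(X_{T})\mid X_{0}\notin\mathcal{X}]\leq(1-\beta)^{T}\mathcal{D}+\frac{\mathcal{B}}{\beta}$; multiplying by $\pi(\mathcal{X}^{c})$ yields the third term.

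On $\mathcal{E}$ I would use $d(X_{T},\tilde{X}_{T})\leq L(X_{T})+L(\tilde{X}_{T})+2c_{p}$ and control each term through the first exit time $\sigma=\inf\{t\geq1:X_{t}\notin\mathcal{X}\text{ or }\tilde{X}_{t}\notin\mathcal{X}\}$, which is $\leq T-1$ on $\mathcal{E}$: since $X_{0},x\in\mathcal{X}$, minimality gives $X_{\sigma-1},\tilde{X}_{\sigma-1}\in\mathcal{X}$, so the middle line of \eqref{IneqThreePart} bounds the Lyapunov value of the exiting chain at time $\sigma$ by $\mathcal{C}$, and iterating the first line of \eqref{IneqThreePart} from $\sigma$ adds at most $\frac{\mathcal{B}}{\beta}$ by time $T$; conditioning on $\mathcal{F}_{\sigma}$, summing over the value of $\sigma$ and using $\sum_{s}\mathbf{1}_{\{\sigma=s\}}\leq1$ (so no $T$-dependent loss appears), and bounding $\P[\mathcal{E}]$ by $1-\P[\{X_{t}\}_{t=1}^{T-1}\cup\{\tilde{X}_{t}\}_{t=1}^{T-1}\subset\mathcal{X}]$, produces the final term, the factor $2$ being one copy of $\mathcal{C}+\frac{\mathcal{B}}{\beta}+c_{p}$ for each chain. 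I expect the main obstacle to be exactly this last step: making the stopping-time and indicator bookkeeping rigorous so the excursion contribution closes with no loss growing in $T$, and in particular handling the case in which only one of the two chains exits $\mathcal{X}$, where the Lyapunov value of the chain that remains in $\mathcal{X}$ must still be controlled — implicitly requiring $\mathcal{X}$ to be, say, a sublevel set of $L$ — while keeping the bound proportional to $1-\P[\cdot]$ rather than flat.
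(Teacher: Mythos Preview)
Your proposal is essentially the paper's own argument: start $X_{0}\sim\pi$, couple so that on $\mathcal{X}$ one has $\E[d(X_{t+1},\tilde X_{t+1})\mid\mathcal F_t]\le(1-\alpha)d(X_t,\tilde X_t)+\delta$, split according to whether both chains stay in $\mathcal{X}$ up to time $T$, whether $X_0\notin\mathcal{X}$, or whether an exit occurs at some $1\le t\le T-1$, and bound each piece exactly as you describe. The only cosmetic difference is that the paper inserts an extra $\gamma>0$ at the coupling step (writing $\E[d\mid\cdot]\le\gamma+W_d(\cdot)$ rather than assuming an optimal coupling is attained) and sends $\gamma\to0$ at the end; your concern about the excursion bookkeeping is the same place the paper's proof is loosest, and the paper resolves it by the same symmetric ``one $\mathcal{C}+\mathcal{B}/\beta+c_p$ per chain'' estimate you propose.
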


In the common situation that a contraction bound holds uniformly, this gives: 

\begin{cor} \label{CorWassCont}
Assume that $K$ has eccentricity $E(x) < \infty$ with respect to $\pi$. Assume that 
\be  \label{IneqSimpleWassContractionNice}
\sup_{x,y \in \Omega} \frac{W_{d}( K(x,\cdot), K(y,\cdot))}{d(x,y)} \leq  (1 - \alpha)
\ee 
and that inequality \eqref{IneqKernCloseWassCont} is satisfied for some $0 < \delta < \infty$.

Then, for any initial point $\tilde{X}_{0} = x \in \Omega$, 
\be \label{IneqAbsCoupResTwoNice}
W_{d}(\mathcal{L}(\tilde{X}_{T}), \pi ) \leq  \frac{\delta}{\alpha} + (1-\alpha)^{T} E(x)
\ee 
and 
\be \label{IneqAbsCoupResTwoStatNice}
W_{d}(\tilde{\pi}, \pi) \leq \frac{\delta}{\alpha}.
\ee 
\end{cor}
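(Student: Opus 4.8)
The plan is to recognize the Corollary as the special case $\mathcal{X} = \Omega$ of Lemma~\ref{LemmaWassCont}, in which the last two error terms of \eqref{IneqAbsCoupResTwo} vanish. Indeed, with $\mathcal{X} = \Omega$ we have $\mathcal{X}^{c} = \emptyset$, so $\pi(\mathcal{X}^{c}) = 0$ kills the third term, and the event $\{X_{t}\}_{t=1}^{T-1}\cup\{\tilde{X}_{t}\}_{t=1}^{T-1} \subset \mathcal{X}$ has probability $1$, so $1 - \P[\cdot] = 0$ kills the fourth term; what remains is exactly $\frac{\delta}{\alpha} + (1-\alpha)^{T} E(x)$, which is \eqref{IneqAbsCoupResTwoNice}. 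The contraction \eqref{IneqSimpleWassContraction} over $\mathcal{X} = \Omega$ is literally \eqref{IneqSimpleWassContractionNice}, and \eqref{IneqKernCloseWassCont} over $\mathcal{X} = \Omega$ is assumed, so the only hypothesis of Lemma~\ref{LemmaWassCont} that is not handed to us is Assumption~\ref{AssDriftLike}.

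To supply Assumption~\ref{AssDriftLike} with $\mathcal{X} = \Omega$, I would fix an arbitrary $p \in \Omega$ and take $L(x) = d(x,p) + 1$ and $c_{p} = 0$, so that $d(x,p) = L(x) - 1 \le L(x) + c_{p}$. The second and third lines of \eqref{IneqThreePart} condition on the now-impossible events $\{Z_{t+1} \notin \mathcal{X}\}$ and $\{X \notin \mathcal{X}\}$, so they hold with, say, $\mathcal{C} = \mathcal{D} = 0$. For the genuine drift inequality, couple $K(x,\cdot)$ with $K(p,\cdot)$ optimally and use \eqref{IneqSimpleWassContractionNice} and the triangle inequality to get $\int d(y,p)\,K(x,dy) \le (1-\alpha)\,d(x,p) + \int d(y,p)\,K(p,dy)$, and then a second application of the contraction (comparing $K(p,\cdot)$ to $\pi = \int K(z,\cdot)\,\pi(dz)$ by joint convexity of $W_{d}$) gives $\int d(y,p)\,K(p,dy) \le (2-\alpha)\,E(p) < \infty$. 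Hence the first line of \eqref{IneqThreePart} holds for $K$ with $\beta = \alpha$ and finite $\mathcal{B}$; the same computation for $\tilde{K}$, invoking \eqref{IneqKernCloseWassCont} to replace $\tilde{K}(x,\cdot)$ by $K(x,\cdot)$ at a cost of $\delta$, gives the same inequality. Taking $\beta = \alpha$, $\mathcal{B} = (2-\alpha)E(p) + \delta + \alpha$, $\mathcal{C} = \mathcal{D} = c_{p} = 0$ makes Assumption~\ref{AssDriftLike} hold for both kernels with respect to $(\Omega,p)$; note that the precise values of these constants are irrelevant, since they enter \eqref{IneqAbsCoupResTwo} only multiplied by $\pi(\mathcal{X}^{c}) = 0$, so we only need them finite. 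Lemma~\ref{LemmaWassCont} then yields \eqref{IneqAbsCoupResTwoNice}.

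For \eqref{IneqAbsCoupResTwoStatNice}, I would run the approximate chain from stationarity, $\tilde{X}_{0} \sim \tilde{\pi}$, so $\mathcal{L}(\tilde{X}_{T}) = \tilde{\pi}$ for every $T$. Applying \eqref{IneqAbsCoupResTwoNice} at each deterministic start and integrating against $\tilde{\pi}$ — legitimate by joint convexity of $W_{d}$, since $\tilde{\pi}\tilde{K}^{T} = \int \delta_{x}\tilde{K}^{T}\,\tilde{\pi}(dx)$ and $\pi = \int \pi\,\tilde{\pi}(dx)$ — gives $W_{d}(\tilde{\pi},\pi) \le \frac{\delta}{\alpha} + (1-\alpha)^{T}\int_{\Omega} E(x)\,\tilde{\pi}(dx)$, and letting $T \to \infty$ (using $0 < \alpha \le 1$) produces \eqref{IneqAbsCoupResTwoStatNice}, \emph{provided} $\int E\,d\tilde{\pi} < \infty$. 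This finiteness is the one point needing a small argument: bound $\int d(u,w)\,\tilde{K}(u,dw) \le \int d(u,w)\,K(u,dw) + \delta \le (2-\alpha)E(u) + \delta$ as above, combine with the contraction-type estimate $W_{d}(\tilde{K}(u,\cdot),\tilde{K}(v,\cdot)) \le (1-\alpha)d(u,v) + 2\delta$ (immediate from \eqref{IneqSimpleWassContractionNice} and \eqref{IneqKernCloseWassCont}) and stationarity of $\tilde{\pi}$ to solve for the $\tilde{\pi}$-eccentricity of $u$, and finish with $E(x) \le d(x,p) + E(p)$.

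The main obstacle is therefore not the inequality bookkeeping — which collapses at once once $\mathcal{X} = \Omega$ — but making sure Assumption~\ref{AssDriftLike} is genuinely satisfiable under the Corollary's weaker hypotheses (exhibiting a valid $L$ with finite constants) and, for the stationary statement, the finiteness of $\int E\,d\tilde{\pi}$; both reduce to the short eccentricity estimates above, which in turn rest only on the uniform contraction.
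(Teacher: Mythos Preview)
Your proof is correct and follows the paper's approach. For \eqref{IneqAbsCoupResTwoNice}, both you and the paper specialize Lemma~\ref{LemmaWassCont} to $\mathcal{X} = \Omega$, so that the third and fourth terms of \eqref{IneqAbsCoupResTwo} vanish; the paper simply asserts this as ``immediate,'' while you are more careful in explicitly supplying Assumption~\ref{AssDriftLike} via $L(x)=d(x,p)+1$ --- a worthwhile exercise, and your drift computation is correct (and, as you note, the particular constants are irrelevant once multiplied by $\pi(\mathcal{X}^{c})=0$).

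For \eqref{IneqAbsCoupResTwoStatNice} there is a minor variation worth noting. The paper starts $\{\tilde{X}_t\}$ at a fixed point $x$, writes $W_{d}(\tilde{\pi},\pi)=\lim_{T\to\infty}W_{d}(\mathcal{L}(\tilde{X}_{T}),\pi)$, and bounds the right side by $\delta/\alpha$ via \eqref{IneqAbsCoupResTwoNice}; this tacitly assumes $W_{d}$-convergence of $\mathcal{L}(\tilde{X}_{T})$ to $\tilde{\pi}$, which the paper does not justify. You instead start at $\tilde{\pi}$, integrate the pointwise bound by convexity of $W_{d}$, and send $T\to\infty$, trading that convergence assumption for the integrability condition $\int E\,d\tilde{\pi}<\infty$. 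Both routes are standard; yours is arguably more self-contained here, since the integrability you need follows directly from the drift inequality you already established for $\tilde{K}$ (integrate at stationarity to get $\int d(x,p)\,\tilde{\pi}(dx)<\infty$, then use $E(x)\le d(x,p)+E(p)$), whereas the paper's limiting identity would itself require an argument.
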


Taking powers of $K$ and noting that all kernels are contractive in Total Variation distance, Lemma \ref{LemmaWassCont} has the immediate corollary:

\begin{cor} \label{LemCoupIneqAdapt2}
Denote by $\tmix$, $\tilde{\tau}_{\mathrm{mix}}$ the mixing times of $K$ and $\tilde{K}$, and assume that
\be \label{IneqTvContAssumption}
\sup_{x \in \Omega} \| \tilde{K}(x,\cdot) - K(x,\cdot) \|_{\TV} < \delta
\ee 
for some $\delta > 0$. Then

\be \label{IneqAbsCoupResThree}
\| \mathcal{L}(\tilde{X}_{T}) -  \pi \|_{\TV} \leq \frac{4 \delta}{3} \tmix  + 4^{-\lfloor \frac{T}{\tmix} \rfloor}
\ee 
and 
\be \label{IneqAbsCoupResThreeStat}
\| \tilde{\pi} -  \pi \|_{\TV} \leq \frac{4 \delta}{3} \tmix.
\ee 
Under the additional assumption that $\delta < \frac{9}{128 \tmix}$,
\be \label{IneqAbsCoupResThreeMix}
\tilde{\tau}_{\mathrm{mix}} \leq 2 \tmix.
\ee
\end{cor}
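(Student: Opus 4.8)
The plan is to specialize the metric in Corollary~\ref{CorWassCont} (equivalently, the machinery behind Lemma~\ref{LemmaWassCont}) to the discrete metric $d(x,y) = \mathbf{1}_{x \neq y}$, for which $W_{d}(\mu,\nu) = \|\mu - \nu\|_{\TV}$ and the eccentricity is trivially finite, $E(x) = 1 - \pi(\{x\}) \leq 1$. A single step of $K$ is generally not a contraction in total variation, so \eqref{IneqSimpleWassContractionNice} cannot be used for $K$ directly; instead I would work with the $\tmix$-th power $K^{\tmix}$, which by the definition of the mixing time satisfies $\sup_{x} \|K^{\tmix}(x,\cdot) - \pi\|_{\TV} < \tfrac14$ and hence genuinely contracts $d$. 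Writing $\bar{d}(k) := \sup_{x,y}\|K^{k}(x,\cdot) - K^{k}(y,\cdot)\|_{\TV}$, this quantity is nonincreasing and submultiplicative (``taking powers of $K$''), $\bar d(0) = 1$, and the mixing-time hypothesis gives the curvature bound $\bar d(\tmix) \leq \tfrac14$ for $K^{\tmix}$ with respect to $d$ (cf.\ the discussion after \eqref{EqDefWass}).

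For \eqref{IneqAbsCoupResThree} I would run the standard perturbation expansion, where $X$ denotes the unperturbed chain started from the same point as $\tilde X$:
\[
\mathcal{L}(\tilde{X}_{T}) - \mathcal{L}(X_{T}) \;=\; \sum_{j=1}^{T} \bigl(\mathcal{L}(\tilde{X}_{j-1})(\tilde{K} - K)\bigr) K^{T-j}.
\]
For each $j$, the signed measure $\mathcal{L}(\tilde{X}_{j-1})(\tilde{K}-K)$ has total mass $0$ and total variation $\leq \delta$ by \eqref{IneqTvContAssumption}, so after applying the Markov kernel $K^{T-j}$ its total variation is at most $\delta\,\bar d(T-j)$ (this is precisely where one uses that Markov kernels do not expand total variation, the remark in the lead-in to the corollary). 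Summing gives $\|\mathcal{L}(\tilde{X}_{T}) - \mathcal{L}(X_{T})\|_{\TV} \leq \delta \sum_{k=0}^{T-1}\bar d(k) \leq \delta\,\tmix\sum_{m\geq 0}\bar d(\tmix)^{m} \leq \tfrac{4}{3}\,\tmix\,\delta$, while $\|\mathcal{L}(X_{T}) - \pi\|_{\TV} \leq \bar d(T) \leq \bar d(\tmix)^{\lfloor T/\tmix\rfloor} \leq 4^{-\lfloor T/\tmix\rfloor}$; the triangle inequality then yields \eqref{IneqAbsCoupResThree}. Letting $T \to \infty$ (the chain converges to its unique stationary law $\tilde\pi$ because $\tilde K^{\tmix}$ is a contraction) kills the second term and gives \eqref{IneqAbsCoupResThreeStat}; this last bound is also exactly the stationary conclusion of Corollary~\ref{CorWassCont} applied to the pair $(K^{\tmix},\tilde K^{\tmix})$, whose hypothesis \eqref{IneqKernCloseWassCont} holds with constant $\tmix\delta$ via the telescoping estimate $\tilde K^{\tmix} - K^{\tmix} = \sum_{j=1}^{\tmix}\tilde K^{\tmix-j}(\tilde K - K)K^{j-1}$ and the same ``kernels contract $\TV$'' fact.

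For \eqref{IneqAbsCoupResThreeMix}, combine \eqref{IneqAbsCoupResThree} with \eqref{IneqAbsCoupResThreeStat} via the triangle inequality: for every starting point,
\[
\|\mathcal{L}(\tilde{X}_{T}) - \tilde\pi\|_{\TV} \;\leq\; \|\mathcal{L}(\tilde{X}_{T}) - \pi\|_{\TV} + \|\pi - \tilde\pi\|_{\TV} \;\leq\; \tfrac{8}{3}\,\tmix\,\delta + 4^{-\lfloor T/\tmix\rfloor}.
\]
Taking $T = 2\tmix$ makes the last term $\tfrac{1}{16}$, and the hypothesis $\delta < \tfrac{9}{128\tmix}$ forces $\tfrac{8}{3}\,\tmix\,\delta < \tfrac{3}{16}$, so the right-hand side is $< \tfrac14$ from every start; by the definition of mixing time this is exactly $\tilde\tau_{\mathrm{mix}} \leq 2\tmix$.

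The genuinely delicate points are (i) reading off the contraction coefficient $\bar d(\tmix) \leq \tfrac14$ from the definition of $\tmix$, i.e.\ the quantitative link between the mixing time and the curvature of a power, since this is what pins down the constant $\tfrac43$; and (ii) the bookkeeping in the perturbation sum, in particular using submultiplicativity of $\bar d$ only along multiples of $\tmix$ together with monotonicity in between, so that the estimate holds for \emph{every} $T$ and not just multiples of $\tmix$. The telescoping identity for $\tilde K^{\tmix} - K^{\tmix}$ and the identification of $\tilde\pi$ as the unique stationary law of $\tilde K^{\tmix}$ are routine.
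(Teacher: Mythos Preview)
Your argument is correct and is essentially the paper's proof: reduce to the discrete metric, telescope to bound $\sup_x\|\tilde K^{\tmix}(x,\cdot)-K^{\tmix}(x,\cdot)\|_{\TV}\le\tmix\,\delta$, use the TV contraction of $K^{\tmix}$ coming from the definition of $\tmix$, and finish \eqref{IneqAbsCoupResThreeMix} by the identical triangle-inequality computation at $T=2\tmix$. The only cosmetic difference is that the paper invokes Corollary~\ref{CorWassCont} on the $\tmix$-skeleton $\{\tilde X_{k\tmix}\}$ as a black box, while you unroll the same estimate as an explicit operator/perturbation expansion valid at every $T$; your organization is arguably a bit cleaner, since it avoids the small step of passing from multiples of $\tmix$ to arbitrary $T$ for a chain whose distance to $\pi$ need not be monotone.
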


\begin{remark} \label{RemarkCompFriel}
We mention that a result very similar to Corollaries \ref{CorWassCont}, \ref{LemCoupIneqAdapt2} is also immediately implied by Corollary 3.1 of \cite{Mitr05}; the constants are also very similar in our regime of interest. Although our Lemma \ref{LemmaWassCont} and the results in \cite{Mitr05} both imply the same result in this restricted setting, they have different emphases. Their result is based on linear algebra; ours is purely probabilistic. Our results also apply to chains that are not uniformly ergodic, as chains can have Wasserstein contraction (or Total Variation contraction on small sets) without being uniformly ergodic. Finally, their result only applies for convergence in Total Variation, while our results explicitly allow the use of many other Wasserstein metrics; this flexibility can lead to bounds that are effectively much sharper if the metric is chosen carefully. \par 
This last difference is most easily seen when the Markov chain $K$ satisfies inequality \eqref{IneqSimpleWassContraction} for some fixed $\alpha > 0$ throughout a non-compact state space, and for which the eccentricity satisfies $E(x) < \infty$ for each $x \in \Omega$ but $\sup_{x \in \Omega} E(x) = \infty$. These chains are generally geometrically ergodic but not uniformly ergodic. Lemma \ref{LemmaWassCont} can provide direct bounds on their finite-time bias while Corollary 3.1 of \cite{Mitr05} does not apply. This class of Markov chains contains many Markov chains of interest. For example, it includes the Metropolis-Hastings chain with proposal distribution $L(x,\cdot) = \mathcal{N}(x,\sigma_{1})$ and target stationary distribution $\pi = \mathcal{N}(\mu, \sigma_{2})$ as long as $\sigma_{1} < \sigma_{2}$. 
\end{remark}

\subsection{Convergence of Monte Carlo Estimates}
The main bounds in subsection \ref{SubsecConvIndPoints} are most useful when some power $K^{\ell}$ of the transition kernel $K$ of interest has positive curvature. This is a (strictly) weaker condition than uniform ergodicity of $K$, but it fails to hold for many Markov chains of statistical interest. In this section, we weaken the requirement of \textit{global} positive curvature for some power $K^{\ell}$ of $K$ to the requirement of \textit{local} positive curvature on a small set \textit{and} a drift condition. 

We weaken the requirements by showing that, if $K$ and $\tilde{K}$ both satisfy a drift condition \textit{and} $\tilde{K}$ is close to $K$ \textit{on a small set}, then
\begin{enumerate}
\item  The bias $\tilde{\pi}(f) - \pi(f)$ is small, and
\item  The Monte Carlo errors $| \frac{1}{T} \sum_{t=1}^{T} f(\tilde{X}_{t}) - \tilde{\pi}(f) |$ and $| \frac{1}{T} \sum_{t=1}^{T} f(X_{t}) - \pi(f) |$ are small with high probability.
\end{enumerate}
Thus, they allow us to estimate the total error of a Monte Carlo estimate obtained from $\tilde{K}$, based only on a drift condition and on a good approximation \textit{within a small set}. 

We begin by showing that the bias is small. To state our result, we recall  the definition of the \textit{trace} $\{X_{t}^{(S)}\}$ of a Markov chain $\{X_{t}\}_{t \geq 0}$ onto a set $S$. Let $T(S) = \{ t \geq 0 \, : \, X_{t} \in S \}$ be an ordered set. When $|T(S)| = \infty$, we define
\be 
\{X_{t}^{(S)} \}_{t \geq 0 } = \{ X_{t} \}_{t \in T(S)},
\ee 
again viewed as an ordered set. We have:

\begin{thm} [Bias for Geometrically Ergodic Chains] \label{BiasGeomErg}
Let $\{X_{t}\}_{t \geq 0}$, $\{\tilde{X}_{t} \}_{t \geq 0}$ be two Markov chains with kernel $K, \tilde{K}$. Assume that the chains satisfy
\be 
\E[V(X_{t+\ell}) | X_{t}] \leq (1-a)V(X_{t}) + b \\
\E[\tilde{V}(\tilde{X}_{t+\ell}) | \tilde{X}_{t}] \leq (1-\tilde{a})V(\tilde{X}_{t}) + \tilde{b} \\
\ee 
for some $\ell \in \mathbb{N}$ and some $0 < a, \tilde{a} \leq 1$ and some $0 \leq b, \tilde{b} < \infty$. Fix a constant $\max(\frac{4b}{a}, \frac{4\tilde{b}}{\tilde{a}}) < C < \infty$ and let $\mathcal{X} = \{x \in \Omega \, : \, V(x) < C, \, \tilde{V}(x) < C \}$. Assume that the trace of $\{ X_{t} \}_{t \geq 0}$ on $\mathcal{X}$ has mixing time $\tmix$ and that 
\be 
\max_{x \, : \, V(x) \leq C, \, \tilde{V}(x) \leq C} \| K(x,\cdot) - \tilde{K}(x,\cdot) \| \leq \delta.
\ee 
Then the stationary distributions $\pi, \tilde{\pi}$ of $K$ and $\tilde{K}$ satisfy
\be 
\| \pi - \tilde{\pi} \|_{\TV} \leq \frac{4 \delta}{3} \tmix + \frac{2b}{a C} +  \frac{2\tilde{b}}{\tilde{a} C}.
\ee 
\end{thm}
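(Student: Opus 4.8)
The plan is to move the comparison of $\pi$ and $\tilde\pi$ off $\Omega$ and onto the small set $\mathcal X = \{x : V(x) < C\} \cap \{x : \tilde V(x) < C\}$, on which $K$ and $\tilde K$ agree to within $\delta$ in total variation, by passing to \emph{trace chains}, and to pay for everything that happens outside $\mathcal X$ using only the two drift inequalities. Recall that if a chain has stationary law $\rho$ and $\rho(S) > 0$, then the trace of the chain on $S$ is itself a Markov chain, its stationary law is $\rho_S := \rho(\,\cdot\cap S)/\rho(S)$, and $\|\rho - \rho_S\|_{\TV} = \rho(S^c)$. Applying this to $\{X_t\}$ and $\{\tilde X_t\}$ with $S = \mathcal X$ and using the triangle inequality reduces the theorem to two claims: (i) $\pi(\mathcal X^c)$ and $\tilde\pi(\mathcal X^c)$ are small, and (ii) $\|\pi_{\mathcal X} - \tilde\pi_{\mathcal X}\|_{\TV} \le \tfrac{4\delta}{3}\tmix$, where $\pi_{\mathcal X}, \tilde\pi_{\mathcal X}$ are the stationary laws of the two trace chains. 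Claim (i) is routine: evaluating the $V$-drift in stationarity gives $\E_\pi[V] \le b/a$, and likewise $\E_{\tilde\pi}[\tilde V] \le \tilde b/\tilde a$, so Markov's inequality together with $C > \max(4b/a, 4\tilde b/\tilde a)$ bounds $\pi(\mathcal X^c)$ and $\tilde\pi(\mathcal X^c)$; these contribute the $\tfrac{2b}{aC} + \tfrac{2\tilde b}{\tilde a C}$ in the statement.

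Claim (ii) is the heart of the matter, and the difficulty is that the hypotheses control $\|K - \tilde K\|_{\TV}$ only on $\mathcal X$. One cannot directly apply Corollary~\ref{LemCoupIneqAdapt2} to the trace kernels $K_{\mathcal X}$ and $\tilde K_{\mathcal X}$: a trace step out of a point $x \in \mathcal X$ may involve an excursion into $\mathcal X^c$, and on $\mathcal X^c$ there is \emph{no} control of $\|K(y,\cdot) - \tilde K(y,\cdot)\|_{\TV}$, so the laws of the return point — hence $K_{\mathcal X}(x,\cdot)$ and $\tilde K_{\mathcal X}(x,\cdot)$ — can be arbitrarily far apart. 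The device I would use is to interpose a hybrid chain with kernel $\hat K$ equal to $\tilde K$ on $\mathcal X$ and equal to $K$ on $\mathcal X^c$. For $x \in \mathcal X$, couple a $K$-step and a $\hat K$-step maximally: they agree with probability at least $1 - \delta$, and whenever they agree at a point of $\mathcal X^c$ both processes then evolve by $K$ until re-entering $\mathcal X$, so the two traces land at the same place. Thus $\sup_{x\in\mathcal X}\|K_{\mathcal X}(x,\cdot) - \hat K_{\mathcal X}(x,\cdot)\|_{\TV} \le \delta$, and since the trace of $\{X_t\}$ on $\mathcal X$ has mixing time $\tmix$, Corollary~\ref{LemCoupIneqAdapt2} gives $\|\pi_{\mathcal X} - \hat\pi_{\mathcal X}\|_{\TV} \le \tfrac{4\delta}{3}\tmix$, where $\hat\pi$ is the stationary law of $\hat K$.

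It remains to see that replacing $\hat K$ by the true $\tilde K$ off $\mathcal X$ costs nothing of significance. The kernels $\hat K$ and $\tilde K$ differ only during excursions into $\mathcal X^c$; under $\tilde\pi_{\mathcal X}$ a trace step triggers such an excursion with probability at most $\tilde\pi(\mathcal X^c)/\tilde\pi(\mathcal X)$ (a Kac-type flow-balance bound), so $\tilde\pi_{\mathcal X}$ is \emph{nearly} invariant for $\hat K_{\mathcal X}$; combining this with the fast mixing of $\hat K_{\mathcal X}$ one gets that $\|\hat\pi_{\mathcal X} - \tilde\pi_{\mathcal X}\|_{\TV}$ is of the order of the tail quantities already present in (i). Collecting the three estimates gives the stated bound.

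The main obstacle is exactly the excursion issue: $K$ and $\tilde K$ are completely uncontrolled on $\mathcal X^c$, so the trace kernels are genuinely \emph{not} uniformly $\delta$-close and the perturbation results of Section~\ref{SecConvBounds} cannot be invoked directly. The hybrid-chain construction, together with the smallness of $\pi(\mathcal X^c)$ and $\tilde\pi(\mathcal X^c)$, is what lets that machinery go through; carrying out the last comparison — $\hat\pi_{\mathcal X}$ versus $\tilde\pi_{\mathcal X}$ — without picking up a spurious factor of $\tmix$ on the tail terms is the place where the argument needs to be done with care.
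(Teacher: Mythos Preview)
Your overall architecture --- split $\|\pi-\tilde\pi\|_{\TV}$ into a trace-stationary term plus two tail terms, bound the tails by drift-plus-Markov (the paper packages this as Lemma~\ref{LemmaDriftConc}), and bound the trace term via Corollary~\ref{LemCoupIneqAdapt2} --- is exactly the paper's proof. The paper's decomposition is the chain of inequalities $\|\pi-\tilde\pi\|_{\TV}\le \|\pi_{\mathcal X}-\tilde\pi_{\mathcal X}\|_{\TV}+2\pi(\mathcal X^c)+2\tilde\pi(\mathcal X^c)$, and it then simply writes ``By Corollary~\ref{LemCoupIneqAdapt2}, $\|\pi_{\mathcal X}-\tilde\pi_{\mathcal X}\|_{\TV}\le \tfrac{4\delta}{3}\tmix$'' with no further discussion of excursions.

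Where you diverge is that you flag the trace-kernel comparison as nontrivial: a trace step from $x\in\mathcal X$ may begin with a jump into $\mathcal X^c$, and nothing in the hypotheses controls $K$ versus $\tilde K$ there, so $\sup_{x\in\mathcal X}\|K_{\mathcal X}(x,\cdot)-\tilde K_{\mathcal X}(x,\cdot)\|_{\TV}\le\delta$ does not obviously follow. The paper elides this point entirely; your hybrid kernel $\hat K$ (equal to $\tilde K$ on $\mathcal X$, to $K$ on $\mathcal X^c$) is a genuine addition and cleanly yields $\sup_{x\in\mathcal X}\|K_{\mathcal X}(x,\cdot)-\hat K_{\mathcal X}(x,\cdot)\|_{\TV}\le\delta$. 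Your residual step, however, is itself incomplete: $\hat K_{\mathcal X}$ and $\tilde K_{\mathcal X}$ differ at $x$ with probability $\tilde K(x,\mathcal X^c)$, which Kac/flow-balance bounds only \emph{on average} under $\tilde\pi_{\mathcal X}$, not uniformly; feeding an averaged one-step discrepancy into a mixing bound naturally produces $\tmix\cdot\tilde\pi(\mathcal X^c)$ rather than $\tilde\pi(\mathcal X^c)$, so the ``spurious $\tmix$'' you worry about is real and I do not see how to remove it under the stated hypotheses. In short: same route as the paper, strictly more careful at the one delicate spot, but that spot is not fully closed by your argument either --- the paper simply does not address it.
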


\begin{remark}
In applications, where $\tilde{K} = K_{n}$ is a Markov chain based on subsampling roughly $n$ of $N$ points, we expect roughly $\delta \sim \frac{\log(C)}{\sqrt{n}}$. Optimizing this leads to bias that goes to 0 a little more quickly than $O(\frac{\log(n)}{\sqrt{n}})$. This is comparable to the $O(\frac{1}{\sqrt{n}})$ rate expected for uniformly ergodic chains.
\end{remark}

The second point, that the  Monte Carlo errors $| \frac{1}{T} \sum_{t=1}^{T} f(\tilde{X}_{t}) - \tilde{\pi}(f) |$ and $| \frac{1}{T} \sum_{t=1}^{T} f(X_{t}) - \pi(f) |$ are small with high probability, follows from existing bounds. In particular, it follows by combining existing concentration inequalities for Markov chains (\textit{e.g.}  Theorem 2 of \cite{dedecker2014subgaussian} or various bounds in \cite{paulin2012concentration})  with the bound on the mixing time $\ttmix$ of $\tilde{K}$ on small sets given in Corollary \ref{LemCoupIneqAdapt2}.

The bounds in this section assume that $\tilde{K}$ satisfies a drift condition. We find that, when a drift condition can be proved for $K$ by hand, it is often not hard to check by hand that $\tilde{K}$ also satisfies a drift condition. However, the general question of when $\tilde{K}$ inherits a drift condition from $K$ seems to be quite difficult. We find that the following simple bound is often useful for `nice' chains $K$, $\tilde{K}$:  

\begin{lemma} [Drift and Minorization of Approximate Chains] \label{LemmDriftCondAustFrame}
Let $K$, $\tilde{K}$ be transition kernels associated with proposal kernel $L$ and acceptance functions $\alpha$, $\tilde{\alpha}$. Assume that $K$ satisfies inequality \eqref{EqLyapunovDef} Also assume that $K$, $\tilde{K}$ satisfy
\be 
\sup_{x \in \Omega} \| K(x,\cdot) - \tilde{K}(x,\cdot) \|_{\TV} &\leq \delta \\
\vert \alpha(x,y) - \tilde{\alpha}(x,y) \vert \leq  \delta f(x,y) &\leq \delta
\ee 
for some $f, \, \delta > 0$, where
\be \label{IneqStrongMultAssump2}
\int_{y} f(x,y) V(y) L(x,dy) &\leq V(x). 
\ee 
Then a chain $X_{t}$ evolving according to $\tilde{K}$ satisfies a drift condition of the form
\be 
\E[V(X_{t+1}) \vert X_{t} = x] \leq (1 - a + 2 \delta) V(x) + b.
\ee 
\end{lemma}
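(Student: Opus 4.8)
The plan is to bound the one-step action $\tilde K V(x)$ of the approximate kernel on the Lyapunov function directly, exploiting that $K$ and $\tilde K$ are built from a common proposal kernel $L$ and differ only through their acceptance functions. Writing out the Metropolis--Hastings-type kernels and reading \eqref{EqLyapunovDef} with $\ell = 1$, for every $x$ with $V(x) < \infty$ we have
\be
KV(x) = \int_{\Omega} V(y)\,\alpha(x,y)\,L(x,dy) + V(x)\Bigl(1 - \int_{\Omega} \alpha(x,z)\,L(x,dz)\Bigr),
\ee
and the identical formula for $\tilde K V(x)$ with $\alpha$ replaced by $\tilde\alpha$. The drift condition for $K$ already forces $\int_{\Omega} V(y)\alpha(x,y)L(x,dy) \le KV(x) < \infty$, and then the hypothesis $|\alpha - \tilde\alpha| \le \delta f$ together with \eqref{IneqStrongMultAssump2} forces $\int_{\Omega} V(y)\tilde\alpha(x,y) L(x,dy) \le \int_{\Omega} V(y)\alpha(x,y)L(x,dy) + \delta V(x) < \infty$ as well, so the two formulas above may be subtracted legitimately.

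First I would subtract the two formulas to obtain
\be
\tilde K V(x) - K V(x) = \int_{\Omega} V(y)\bigl(\tilde\alpha(x,y) - \alpha(x,y)\bigr) L(x,dy) \; - \; V(x)\int_{\Omega} \bigl(\tilde\alpha(x,z) - \alpha(x,z)\bigr) L(x,dz).
\ee
For the first integral I would use the fine bound $|\alpha(x,y) - \tilde\alpha(x,y)| \le \delta f(x,y)$ followed by the multiplicative assumption \eqref{IneqStrongMultAssump2}:
\be
\Bigl| \int_{\Omega} V(y)\bigl(\tilde\alpha(x,y) - \alpha(x,y)\bigr) L(x,dy) \Bigr| \le \delta \int_{\Omega} f(x,y) V(y)\, L(x,dy) \le \delta V(x).
\ee
For the second term I would instead use the crude bound $|\alpha(x,y) - \tilde\alpha(x,y)| \le \delta$ together with $L(x,\Omega) = 1$, giving $\bigl| V(x) \int_{\Omega} (\tilde\alpha - \alpha)(x,z) L(x,dz) \bigr| \le \delta V(x)$. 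Adding the two estimates yields $|\tilde K V(x) - K V(x)| \le 2\delta V(x)$, so combining with $KV(x) \le (1-a) V(x) + b$ gives
\be
\tilde K V(x) \le (1-a) V(x) + b + 2 \delta V(x) = (1 - a + 2\delta) V(x) + b,
\ee
which is the asserted drift condition for $\tilde K$.

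There is no serious obstacle here: the argument is a short computation once the kernels are written out explicitly. The only points needing care are (i) the integrability bookkeeping indicated in the first paragraph, so that the subtraction of the two kernel formulas is valid, and (ii) noticing that the ``accepted'' and ``rejected'' pieces of the difference must be controlled using the two different facets of the chained hypothesis $|\alpha - \tilde\alpha| \le \delta f \le \delta$: against the unbounded integrand $V(y)$ one needs the finer factor $\delta f(x,y)$, which is exactly where \eqref{IneqStrongMultAssump2} is used to reabsorb the integral back into $V(x)$, whereas against the constant integrand $V(x)$ the crude bound $\delta$ already suffices. Note that the total variation hypothesis $\sup_{x}\|K(x,\cdot) - \tilde K(x,\cdot)\|_{\TV} \le \delta$ is not needed for the drift bound itself.
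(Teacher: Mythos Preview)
Your proof is correct and follows essentially the same route as the paper: write $\tilde K V(x)$ via the common proposal $L$ and the acceptance function $\tilde\alpha$, compare to $KV(x)$, and control the two difference terms using $|\alpha-\tilde\alpha|\le\delta f$ with \eqref{IneqStrongMultAssump2} for the $V(y)$ piece and $f\le 1$ (equivalently, your crude bound $|\alpha-\tilde\alpha|\le\delta$) for the $V(x)$ piece. Your added remarks on integrability and on the TV hypothesis being unused are accurate and harmless.
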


For further study of this question, see \textit{e.g.} \cite{rudolf2015perturbation, FHL13}.

\begin{remark} 
Lemma \ref{LemmDriftCondAustFrame} is our first result that specifies a relationship between the proposal kernels associated with the two chains $K$ and $\tilde{K}$. We force the two kernels to have the same proposal distribution, which may be surprising: it is well-known that a subsampling MCMC algorithm should \textit{not} use the same proposal kernel as the original MCMC dynamics (see Remark \ref{RemPropKernChoice} below for more on this choice).

This choice is not an accident. Perturbation bounds are only effective when the original kernel $K$ and the approximate kernel $\tilde{K}$ are `close' to each other, and in general the true MCMC dynamics are not `close' to the dynamics of subsampling MCMC. As a consequence, Lemma \ref{LemmDriftCondAustFrame} and the other bounds in this section often give very poor bounds when applied directly to these two chains. Instead, they will generally be applied to a collection of interpolating chains. This is discussed in further detail starting in Section \ref{SecCompChoice}, where our interpolating chains are defined. 
\end{remark}

\section{Bias-Variance Tradeoff and Applications to Austerity Framework} \label{SecAusterity}

In this section, we recall various `approximate' MCMC chains, introduce measures of computational and statistical efficiency, give a general tradeoff result for these measures, and apply the tradeoff result to certain examples.

\subsection{Approximate Metropolis-Hastings Algorithms} \label{SubsecApproxMhAlgs}

We consider a data set $\mathcal{X} = \{x_{1},\ldots,x_{N} \}$ and are interested in sampling from a posterior distribution  
\be \label{EqDefPostDist}
\pi(\theta) \equiv \pi(\theta \vert \{x_{i} \}_{i=1}^{N}) = p(\theta) \prod_{i=1}^{N} p(\theta \vert x_{i}).
\ee

One standard tool is the Metropolis-Hastings algorithm. Fix a reversible transition kernel $L$ on state space $\Omega$. We recall the following form of the Metropolis-Hastings algorithm from \cite{BDC14}:

\begin{algorithm}[!ht] \caption{Metropolis-Hastings Algorithm}
\label{AlgGenMH}
\begin{algorithmic} 
\STATE Initialize $X_{1} = x$.
\FOR  {$t = 1$ to $T$}
\STATE Generate $\ell_{t+1} \sim L(X_{t}, \cdot)$, $u_{t} \sim \mathbb{U}[0,1]$.
\STATE Set $\psi(u_{t},X_{t},\ell_{t+1}) = \frac{1}{N} \log(u_{t} \frac{p(X_{t}) L(X_{t},\ell_{t+1})}{ p(\ell_{t+1}) L(\ell_{t+1},X_{t})})$, $\Lambda(X_{t},\ell_{t+1}) = \frac{1}{N} \sum_{i=1}^{N} \log(\frac{p(x_{i} | \ell_{t+1})}{p(x_{i} | X_{t})})$.
\IF {$\Lambda(X_{t},\ell_{t+1}) > \psi(u_{t},X_{t},\ell_{t+1})$}
\STATE  Set $X_{t+1} = \ell_{t+1}$.
 \ELSE
\STATE
 Set $X_{t+1} = X_{t}$.
\ENDIF
\ENDFOR
\end{algorithmic}
\end{algorithm}

Throughout the rest of the paper, the kernel $\KMH$ will refer to the kernel associated with Algorithm \ref{AlgGenMH}. The goal of all approximate algorithms \cite{BDC14, AFEB14, quiroz2014speeding, rudolf2015perturbation} is to make the decision in step 5 of this algorithm without doing the complete computation in step 4. 

Algorithm \ref{AlgGenAustFrame}, first suggested in \cite{BDC14}, is a representative approximate MCMC algorithm. The associated constants are
\be 
f_{t}^{\ast} &= \frac{t-1}{N} \\
C_{\theta,\theta'} &= \max_{1 \leq i \leq N} | \log( p(x_{i} | \theta')) - \log( p(x_{i} | \theta)) | \\ 
c_{t} &= C_{\theta,\theta'} \sqrt{\frac{2(1-f_{t}^{\ast}) \log(2 / \delta_{t})}{t}}
\ee 
and any number $0 < \gamma < \infty$.

\begin{algorithm}[!ht] \caption{Subsampling MCMC}
\label{AlgGenAustFrame}
\begin{algorithmic} 
\STATE Initialize $X_{1} = x$.
\FOR {$t = 1$ to $T$}
\STATE Generate $\ell_{t+1} \sim L(X_{t}, \cdot)$, $u_{t} \sim \mathbb{U}[0,1]$.
\STATE Set $\psi(u_{t},X_{t},\ell_{t+1}) = \frac{1}{N} \log(u_{t} \frac{p(X_{t}) L(X_{t},\ell_{t+1})}{ p(\ell_{t+1}) L(\ell_{t+1},X_{t})})$.
\STATE Set $s=0$, $s_{\mathrm{look}} = 0$, $\Lambda^{\ast} = 0$, $\mathcal{X}^{\ast} = \emptyset$, $b =1$, $DONE = FALSE$.
\WHILE{$DONE == FALSE$}
\STATE     Sample $x_{s+1}^{\ast},\ldots,x_{b}^{\ast} \sim_{\text{w/o repl}} \mathcal{X} \backslash \mathcal{X}^{\ast}$.
\STATE     Set $\mathcal{X}^{\ast} = \mathcal{X}^{\ast} \cup \{x_{s+1}^{\ast},\ldots,x_{b}^{\ast} \}$, $\Lambda^{\ast} = \frac{1}{b}(t \Lambda^{\ast} + \sum_{i=s+1}^{b} \log (\frac{p(x_{i}^{\ast} | \ell_{t+1})}{p(x_{i}^{\ast} | X_{t})}) )$, and $s = b$.
\STATE     Set $c = 2C_{X_{t},\ell_{t+1}} \sqrt{\frac{(1 - f_{s}^{\ast}) \log(\frac{2}{\delta_{s_{\mathrm{look}}}})}{2 s}}$, $s_{\mathrm{look}} = s_{\mathrm{look}} + 1$, $b = \min(N, \lceil \gamma s \rceil)$.
\IF  {$|\Lambda^{\ast} - \psi(u,X_{t},\ell_{t+1}) | \geq c$ or $b > N$} 
\STATE Set $DONE = TRUE$. 
\ENDIF
\ENDWHILE
\IF {$\Lambda^{\ast} > \psi(u_{t},X_{t},\ell_{t+1})$}
\STATE Set $X_{t+1} = \ell_{t+1}$.
 \ELSE
\STATE Set $X_{t+1} = X_{t}$.
\ENDIF
\ENDFOR
\end{algorithmic}
\end{algorithm}

Algorithm \ref{AlgGenMhNarrow} provides a very simple subsampling algorithm, with parameter $1 \leq n \leq N$ representing the amount of subsampling. For the remainder of the paper, $\KS{n}$ will refer to the kernel associated with Algorithm \ref{AlgGenMhNarrow}.

\begin{algorithm}[!ht] \caption{Simple Approximate Metropolis-Hastings Algorithm; Narrow Tails}
\label{AlgGenMhNarrow}
\begin{algorithmic} 
\STATE Initialize $X_{1} = x$.
\FOR{ $t = 1$ to $T$}

\STATE Generate $\ell_{t+1} \sim L(X_{t}, \cdot)$ and $u_{t} \sim \mathbb{U}[0,1]$. 
\STATE sample $x_{1}^{\ast},\ldots,x_{n}^{\ast}$ without replacement from $\mathcal{X}$, set $\psi(u_{t},X_{t},\ell_{t+1}) = \frac{1}{N} \log(u_{t} \frac{p(X_{t}) L(X_{t},\ell_{t+1})}{ p(\ell_{t+1}) L(\ell_{t+1},X_{t})})$, and set $\Lambda^{\ast}(X_{t},\ell_{t+1}) = \frac{1}{n} \sum_{i=1}^{n} \log(\frac{p(x_{i}^{\ast} | \ell_{t+1})}{p(x_{i}^{\ast} | X_{t})})$.
\IF {$\Lambda^{\ast}(X_{t},\ell_{t+1}) > \psi(u_{t},X_{t},\ell_{t+1})$}
\STATE  Set $X_{t+1} = \ell_{t+1}$.
 \ELSE
\STATE
 Set $X_{t+1} = X_{t}$.
\ENDIF
\ENDFOR
\end{algorithmic}
\end{algorithm}

\begin{remark} \label{RemPropKernChoice}

All of the algorithms stated in this section, and in later sections, are defined for any reversible kernel $L$. In practice, we are most interested in proposal kernels of the form
\be \label{EqFormPropDist}
L(x,y) \equiv L_{\sigma}(x,y) = f_{\sigma}(x-y),
\ee 
where $f_{\sigma}$ is a density with variance $\sigma$. Both in practice and all our theorems, the choice of $\sigma$ will be adapted to the variance of the stationary distribution of the underlying Markov chain - the variance of the proposal distribution should increase with the variance of the stationary distribution. In particular,  a subsampling MCMC algorithm should generally have a proposal distribution $f_{\sigma}$ with a larger variance than the optimal variance of the original Metropolis-Hastings dynamics. See \textit{e.g.} \cite{rosenthal2011optimal} for a survey on the optimization of proposal distributions.

\end{remark}

\subsection{Measures of Computational and Statistical Efficiency} \label{SubsecMeasCompStatEff}

We introduce measures of \textit{computational efficiency} for approximate MCMC, in the same tradition as earlier work such as \cite{sherlock2014efficiency, doucet2015efficient, bornn2014use, quiroz2014speeding}.

Recall that the goal of MCMC is often to estimate the mean $\pi(f)$ of some function $f$ with respect to a distribution $\pi$ of interest. This estimator is generally of the form
\be \label{EqMcmcEstimator}
\hat{\pi}_{T}(f) = \frac{1}{T+1} \sum_{t=0}^{T} f(X_{t}).
\ee
For an estimator of the form \eqref{EqMcmcEstimator} started at point $X_{0} = x$ and driven by the `approximate' Markov kernel $\tilde{K}$ with stationary distribution $\tilde{\pi}$, we denote by $v_{T}(f,\tilde{K},x)$ the mean-squared error of \eqref{EqMcmcEstimator}:
\be 
v_{T}(f,\tilde{K},x) = \E_{x}[ (\hat{\pi}_{T}(f) - \pi(f))^{2}].
\ee 

We associate with every Markov kernel $\tilde{K}$ the \textit{computational cost} $c(\tilde{K})$ of taking a single step. In our setting, this will be the average number of likelihoods that must be evaluated at every step; for example, we assign to the kernel $K_{n}^{(\mathrm{simple})}$ defined in Algorithm \ref{AlgGenMhNarrow} the cost $c(K_{n}^{(\mathrm{simple})}) = n$. 

We then assume that we have a limited \textit{total computational budget} $M$ and define the \textit{standardized error} of an algorithm $\tilde{K}$ to be
\be 
\mathcal{E}_{M}(f,\tilde{K},x) = v_{\lfloor \frac{M}{c(\tilde{K})} \rfloor}(f,\tilde{K},x),
\ee 
 the smallest error obtainable by running $\tilde{K}$ for $\lfloor \frac{M}{c(\tilde{K})} \rfloor$ steps.

$\mathcal{E}_{M}(f,\tilde{K},x)$ is difficult to compute, and so for the remainder of the paper we replace it with a pessimistic estimate. Let $\ttmix$ be the mixing time of $\tilde{K}$. Let $\{X_{t}\}_{t \geq 0}$, $\{Y_{t}\}_{t \geq 0}$ be Markov chains evolving according to $\tilde{K}$ and started at points $X_{0} = x \in \Omega$ and $\mathcal{L}(Y_{0}) = \tilde{\pi}$ respectively. Couple these chains to satisfy 
\be \label{EqOptimalCouplingDef}
\P[X_{(k+1)\ttmix} = Y_{(k+1)\ttmix} | X_{k \ttmix} \neq Y_{k\ttmix} ]  \geq \frac{3}{4}
\ee 
for all integers $k \geq 0$ and 
\be 
\{ X_{t} = Y_{t} \} \Rightarrow \{ X_{s} = Y_{s} \}
\ee 
for all pairs of integers $s > t \geq 0$. The fact that at least one such coupling exists follows from the definition of the mixing time. We note that, for all $f \, : \, \Omega \mapsto [0,1]$,
\be 
v_{T}(f,\tilde{K},x) = \E_{x}[ (\hat{\pi}_{T}(f) - \pi(f))^{2}] &\leq 3 ( \E_{x}[ (\frac{1}{T+1} \sum_{t=0}^{T} (f(X_{t}) - f(Y_{t})))^{2}] + \E_{x}[(\frac{1}{T+1} \sum_{t=0}^{T} f(Y_{t}) - \tilde{\pi}(f))^{2}] \\
&+ (\tilde{\pi}(f) - \pi(f))^{2}) \\
\ee 
We then define 
\be 
\mathcal{E}_{M}^{(1)}(\tilde{K},f) &=  \sup_{x \in \Omega}  \E_{x}[ (\frac{1}{T+1} \sum_{t=0}^{T} (f(X_{t}) - f(Y_{t})))^{2}]   \\
\mathcal{E}_{M}^{(2)}(\tilde{K},f) &=  \E[(\frac{1}{T+1} \sum_{t=0}^{T} f(Y_{t}) - \tilde{\pi}(f))^{2}] \\
\mathcal{E}_{M}^{(3)}(\tilde{K},f) &=  (\tilde{\pi}(f) - \pi(f))^{2} \\
\ee 
and set
\be \label{EqTotalError} 
\mathcal{E}_{M}(\tilde{K},f) = (\mathcal{E}_{M}^{(1)}(\tilde{K},f) + \mathcal{E}_{M}^{(2)}(\tilde{K},f) + \mathcal{E}_{M}^{(3)}(\tilde{K},f)).
\ee 
Note that the three error terms $\mathcal{E}_{M}^{(1)}$, $\mathcal{E}_{M}^{(2)}$ and $\mathcal{E}_{M}^{(3)}$ represent, respectively, 
\begin{enumerate}
\item Bias due to the chain not starting at stationarity (so-called `burn-in' bias). By inequality \eqref{EqOptimalCouplingDef}, this generally decays like 
\be \label{IneqBiasBasic}
\mathcal{E}_{M}^{(1)} = O(\frac{\ttmix^{2}}{T^{2}}).
\ee
\item The asymptotic variance of any MCMC estimate coming from $\tilde{K}$. By Equation 11.4 of \cite{aldous-fill-2014} and following discussion, this generally decays like 
\be 
\mathcal{E}_{M}^{(2)} = O(\frac{\ttmix}{T} \mathrm{Var}_{\tilde{\pi}}(f)). 
\ee
\item The asymptotic bias due to the fact that $\tilde{K}$ and $K$ don't have the same stationary measure. By Corollary \ref{LemCoupIneqAdapt2},  this generally decays like 
\be 
\mathcal{E}_{M}^{(3)} = O(\delta^{2} \ttmix^{2}), 
\ee 
where $\delta = \sup_{x \in \Omega} \| \tilde{K}(x,\cdot) - K(x,\cdot) \|_{\TV}$.
\end{enumerate}

Let $\mathbb{K}$ denote all kernels within some parameterized family (for example, all kernels defined by Algorithm \ref{AlgGenMhNarrow} as the parameter $n$ is allowed to vary from $1$ to $N$). In our framework, the main questions about the efficiency of approximate MCMC algorithms may be written as:

\begin{enumerate} \label{EnumThreeQuestions}
\item For a given computational budget $M$ and function $f$, what is $\tilde{K}^{\ast} = \mathrm{argmin}_{\tilde{K} \in \mathbb{K}} (\mathcal{E}_{M}(\tilde{K},f))$? 
\item How much smaller is $\mathcal{E}_{M}(\tilde{K}^{\ast},f)$ than the error $\mathcal{E}_{M}(K,f)$ of the reference kernel $K$?
\item We also add: are perturbation-based analyses sufficient to find good answers to these questions?
\end{enumerate}

\subsection{General Tradeoff Theorem} \label{SecTradeoffChoice}

In this section, we begin to answer these questions by giving a simple `tradeoff' theorem for approximate MCMC algorithms. This will give bounds on the possible values of the `best' approximate chain $K^{\ast}$. In particular, it will give sufficient conditions under which approximate MCMC algorithms do better than the underlying MCMC algorithm (that is, $K^{\ast} \neq K$). The theorem has many assumptions, and we will spend the rest of the paper giving conditions under which these assumptions hold.

Before giving the result, we state some simple technical lemmas:

\begin{lemma}[Burn-In Error] \label{LemmaBurninBound}
Fix a kernel $K$ with computational cost $c(K)$ and mixing time $\tmix$. Then for all $f \, : \, \Omega \mapsto [0,1]$, the error $\mathcal{E}_{M}^{(1)}(K)$ satisfies
\be 
\mathcal{E}_{M}^{(1)}(K,f) \leq 2 \tmix^{2} \big\lfloor \frac{M}{c(K)} \big\rfloor^{-2}.
\ee 
\end{lemma}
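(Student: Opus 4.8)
The plan is to express $\mathcal{E}_M^{(1)}(K,f)$ through the coalescence time of the coupling $(X_t,Y_t)$ used in the definition of the standardized error (specialized to $\tilde K = K$, so that the mixing time appearing in \eqref{EqOptimalCouplingDef} is $\tmix$), and then to bound the second moment of that coalescence time using \eqref{EqOptimalCouplingDef}. Write $T = \lfloor M/c(K)\rfloor$ and $\tau = \inf\{t\ge 0 : X_t = Y_t\}$. The coupling satisfies $\{X_t=Y_t\}\Rightarrow\{X_s=Y_s\}$ for all $s\ge t$, so $f(X_t)=f(Y_t)$ for every $t\ge\tau$; and $f\in[0,1]$ forces $|f(X_t)-f(Y_t)|\le 1$ for every $t$. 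Hence
\be
\Bigl|\frac{1}{T+1}\sum_{t=0}^{T}\bigl(f(X_t)-f(Y_t)\bigr)\Bigr| \;\le\; \frac{1}{T+1}\sum_{t=0}^{T}\mathbf{1}_{t<\tau} \;=\; \frac{\min(\tau,T+1)}{T+1} \;\le\; \frac{\tau}{T+1},
\ee
and squaring, taking expectations, and taking the supremum over starting points gives $\mathcal{E}_M^{(1)}(K,f)\le (T+1)^{-2}\sup_{x\in\Omega}\E_x[\tau^2]$.

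Next I would bound $\sup_x\E_x[\tau^2]$ by a constant multiple of $\tmix^2$ by a routine renewal estimate driven by \eqref{EqOptimalCouplingDef}: conditionally on $\{X_{k\tmix}\ne Y_{k\tmix}\}$ the chains have coalesced by time $(k+1)\tmix$ with probability at least $\tfrac34$, so the Markov property and induction on $k$ give $\P_x[\tau>k\tmix]\le 4^{-k}$ for every integer $k\ge0$, where the monotonicity of the coupling is used to write $\{\tau>(k+1)\tmix\}\subset\{\tau>k\tmix\}$. Thus $\lceil\tau/\tmix\rceil$ is stochastically dominated by a geometric random variable, and feeding the tail bound $\P_x[\tau>u]\le 4^{-\lfloor u/\tmix\rfloor}$ into $\E_x[\tau^2]=2\int_0^\infty u\,\P_x[\tau>u]\,du$ reduces the estimate to summing a geometric series; this yields $\sup_x\E_x[\tau^2]=O(\tmix^2)$. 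Combining this with the display above and using $(T+1)^{-2}\le T^{-2}=\lfloor M/c(K)\rfloor^{-2}$ gives the stated inequality.

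I do not expect a genuine obstacle here: the substance lies in the existence of the coupling \eqref{EqOptimalCouplingDef}, which is already granted, and what remains is to unwind the definition of $\mathcal{E}_M^{(1)}$ and to sum a geometric series. The two points that call for a little care are the double use of the monotone coupling property — once to dominate each summand by $\mathbf{1}_{t<\tau}$ and once to propagate the tail estimate from one length-$\tmix$ block to the next — and the bookkeeping of the numerical constant in the bound on $\E_x[\tau^2]$ (for which the trivial estimate $\mathcal{E}_M^{(1)}(K,f)\le 1$ is also available when $\tmix$ is not small relative to $T$), so that the leading constant comes out no larger than $2$.
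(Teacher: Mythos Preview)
Your proposal is correct and follows essentially the same route as the paper: bound the averaged difference by $\min(\tau,T+1)/(T+1)\le \tau/(T+1)$ where $\tau$ is the coalescence time of the coupling, and then control $\E_x[\tau^2]$ by the geometric tail $\P_x[\tau>k\tmix]\le 4^{-k}$ coming from \eqref{EqOptimalCouplingDef}. The paper's write-up is terser (and has a typo or two in the intermediate displayed line), but the structure and the ideas are identical, including the final appeal to a geometric series to produce the constant.
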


\begin{lemma}[Asymptotic Variance] \label{LemmaVarianceBound}
Fix a kernel $K$ with computational cost $c(K)$ and mixing time $\tmix$.  Then for all $f \, : \, \Omega \mapsto [0,1]$, the error $\mathcal{E}_{M}^{(2)}(K,f)$ satisfies
\be 
\mathcal{E}_{M}^{(2)}(K,f ) \leq 2 \tmix \mathrm{Var}_{\pi}(f)  \big\lfloor \frac{M}{c(K)} \big\rfloor^{-1}.
\ee 
\end{lemma}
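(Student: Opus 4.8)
The plan is to bound the variance term $\mathcal{E}_M^{(2)}(K,f) = \E[(\frac{1}{T+1}\sum_{t=0}^T f(Y_t) - \pi(f))^2]$, where $\{Y_t\}$ is a stationary chain evolving according to $K$ (so $Y_0 \sim \pi$) and $T = \lfloor M/c(K) \rfloor$. First I would expand the square to write this as $\frac{1}{(T+1)^2}\sum_{s,t=0}^T \mathrm{Cov}_\pi(f(Y_s), f(Y_t))$. Since the chain is stationary this equals $\frac{1}{(T+1)^2}\sum_{s,t} \mathrm{Cov}_\pi(f(Y_0), f(Y_{|s-t|}))$, and the standard reorganization gives the bound $\frac{1}{T+1}\big(\mathrm{Var}_\pi(f) + 2\sum_{k=1}^{T}\mathrm{Cov}_\pi(f(Y_0),f(Y_k))\big)$ (dropping a harmless $\frac{T+1-k}{T+1} \le 1$ factor).

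The key step is to control the lag-$k$ autocovariance via the mixing time. Because $f$ takes values in $[0,1]$, I can bound $|\mathrm{Cov}_\pi(f(Y_0),f(Y_k))|$ by a quantity involving $\|K^k(x,\cdot) - \pi\|_{\TV}$ averaged against $\pi$; more precisely $\mathrm{Cov}_\pi(f(Y_0),f(Y_k)) = \E_\pi[f(Y_0)(\E[f(Y_k)\mid Y_0] - \pi(f))] \le \E_\pi[\|K^k(Y_0,\cdot) - \pi\|_{\TV}]$ using $0 \le f \le 1$. Submultiplicativity of total-variation distance to stationarity gives the geometric decay $\sup_x \|K^k(x,\cdot) - \pi\|_{\TV} \le 2^{-\lfloor k/\tmix \rfloor}$, the same estimate already invoked in Corollary \ref{LemCoupIneqAdapt2}. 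Summing the geometric-type series $\sum_{k \ge 1} 2^{-\lfloor k/\tmix\rfloor}$ yields a bound of order $\tmix$; being slightly careful with constants, the sum is at most $\tmix \cdot \frac{1}{1 - 1/2} = 2\tmix$ (each "block" of $\tmix$ consecutive lags contributes at most $\tmix$ times a term of a geometric series with ratio $1/2$), and combined with the leading $\mathrm{Var}_\pi(f) \le \mathrm{Var}_\pi(f)$ term this gives $\mathcal{E}_M^{(2)}(K,f) \le \frac{1}{T+1}(2\tmix + 1)\mathrm{Var}_\pi(f)$, which I would then simplify to the stated $2\tmix \mathrm{Var}_\pi(f)\lfloor M/c(K)\rfloor^{-1}$ by absorbing constants (using $\tmix \ge 1$ and $T+1 > T$).

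The main obstacle is bookkeeping the constants so that the final numerical factor is exactly $2$ rather than something slightly larger: the covariance-to-TV step loses a factor (the Dobrushin-type bound $|\mathrm{Cov}_\pi(f(Y_0),f(Y_k))| \le \|K^k - \pi\|$ rather than $\le \tfrac12\|\cdot\|$ depending on how one normalizes), and the floor function in $2^{-\lfloor k/\tmix\rfloor}$ forces a slightly generous grouping of terms. One clean way to get the clean constant is to instead use the contraction-coupling characterization directly: couple two stationary copies so that they coalesce within $\tmix$ steps with probability $\ge 3/4$ (as in \eqref{EqOptimalCouplingDef}), which gives $\mathrm{Cov}_\pi(f(Y_0),f(Y_k)) \le 4^{-\lfloor k/\tmix\rfloor}$ and makes the geometric sum converge fast enough that the $\tmix$-factor comes with a constant comfortably below $2$. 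I expect no conceptual difficulty here — this is a textbook variance-of-a-Markov-average estimate — only the routine care needed to land on the advertised constant.
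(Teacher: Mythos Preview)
Your approach has a genuine gap in the step where you claim to recover the factor $\mathrm{Var}_\pi(f)$. Your covariance estimate reads, correctly,
\[
\bigl|\mathrm{Cov}_\pi(f(Y_0),f(Y_k))\bigr|
= \bigl|\E_\pi[f(Y_0)(\E[f(Y_k)\mid Y_0]-\pi(f))]\bigr|
\le \E_\pi\bigl[\|K^k(Y_0,\cdot)-\pi\|_{\TV}\bigr]
\le 2^{-\lfloor k/\tmix\rfloor},
\]
but note that the right-hand side contains \emph{no} factor of $\mathrm{Var}_\pi(f)$. Summing over $k$ therefore gives
\[
\mathcal{E}_M^{(2)}(K,f)\;\le\;\frac{1}{T+1}\Bigl(\mathrm{Var}_\pi(f)+C\,\tmix\Bigr),
\]
not $\frac{1}{T+1}(2\tmix+1)\mathrm{Var}_\pi(f)$ as you wrote. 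The leap from the former to the latter is exactly what fails: a TV-based bound on the autocovariance can only see $\|f\|_\infty$, not $\mathrm{Var}_\pi(f)$. This is not a cosmetic issue, because the downstream application (Theorem \ref{ThmTradeoff}) uses assumption \eqref{IneqScalingAssumptionTradeoff2}, namely $\mathrm{Var}_{\pi^{(n)}}(f)\le C''/n$, in an essential way; a bound of the form $\tmix/T$ without the variance factor would destroy the tradeoff calculation.

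The paper's proof is simply a citation to Lemma 4.23 and Proposition 4.29 of Aldous--Fill. Those results go through the \emph{relaxation time} (equivalently, the spectral gap) of a reversible chain: one has $\mathrm{Cov}_\pi(f(Y_0),f(Y_k))\le \lambda_\ast^{\,k}\,\mathrm{Var}_\pi(f)$ by spectral decomposition, and then the relaxation time is controlled by $\tmix$. This is precisely the mechanism that preserves the $\mathrm{Var}_\pi(f)$ factor, and it is the missing ingredient in your plan. If you want a self-contained argument, you should replace the TV bound on covariances by the spectral one; your coupling variant in the last paragraph suffers from the same defect and will not fix the problem.
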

\begin{proof}
This follows immediately from  Lemma 4.23 and Proposition 4.29 of \cite{aldous-fill-2014}.
\end{proof}

\begin{lemma}[Asymptotic Bias] \label{LemmaBiasBound}
Fix a pair of kernels $K$, $\tilde{K}$ with mixing times $\tmix$, $\ttmix$. Assume that 
\be 
\sup_{x \in \Omega} \| K(x,\cdot) - \tilde{K}(x,\cdot) \|_{\TV} \leq \delta 
\ee  
for some $0 < \delta < 1$. Then for all $f \, : \, \Omega \mapsto [0,1]$, the error $\mathcal{E}_{M}^{(3)}(K,f)$ satisfies
\be 
\mathcal{E}_{M}^{(3)}(K,f ) \leq \frac{16 \delta^{2}}{9} \min(\tmix, \ttmix)^{2}.
\ee 
\end{lemma}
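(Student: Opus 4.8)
The plan is to observe that $\mathcal{E}_{M}^{(3)}(K,f) = (\tilde{\pi}(f) - \pi(f))^{2}$ depends only on the two stationary measures, so it suffices to bound $|\tilde{\pi}(f) - \pi(f)|$ by $\frac{4\delta}{3}\min(\tmix,\ttmix)$ and then square. First I would invoke inequality \eqref{IneqAbsCoupResThreeStat} of Corollary \ref{LemCoupIneqAdapt2} directly, which (using the hypothesis $\sup_{x} \| K(x,\cdot) - \tilde{K}(x,\cdot) \|_{\TV} \leq \delta$) gives $\| \tilde{\pi} - \pi \|_{\TV} \leq \frac{4\delta}{3}\tmix$.

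Next I would note that the closeness hypothesis is symmetric in $K$ and $\tilde{K}$, since $\| K(x,\cdot) - \tilde{K}(x,\cdot) \|_{\TV} = \| \tilde{K}(x,\cdot) - K(x,\cdot) \|_{\TV}$. Hence Corollary \ref{LemCoupIneqAdapt2} may be applied a second time with the roles of the two chains interchanged, yielding $\| \tilde{\pi} - \pi \|_{\TV} \leq \frac{4\delta}{3}\ttmix$ as well. Combining the two bounds gives $\| \tilde{\pi} - \pi \|_{\TV} \leq \frac{4\delta}{3}\min(\tmix,\ttmix)$.

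Finally I would use the elementary fact that for any $f \, : \, \Omega \mapsto [0,1]$ one has $|\mu(f) - \nu(f)| \leq \| \mu - \nu \|_{\TV}$ (apply the dual characterization of total variation to $g = 2f - 1 \in [-1,1]$). This gives
\be
\mathcal{E}_{M}^{(3)}(K,f) = (\tilde{\pi}(f) - \pi(f))^{2} \leq \| \tilde{\pi} - \pi \|_{\TV}^{2} \leq \frac{16\delta^{2}}{9}\min(\tmix,\ttmix)^{2},
\ee
as claimed.

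There is essentially no obstacle here: the statement is a one-line corollary of Corollary \ref{LemCoupIneqAdapt2}. The only points requiring a word of care are the symmetry of the hypothesis (so that the corollary can be applied in both directions to produce the $\min$) and the passage from the total-variation bound on the stationary measures to a bound on the difference of expectations of a $[0,1]$-valued test function; both are routine.
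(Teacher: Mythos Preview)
Your proposal is correct and is exactly the approach the paper takes: the paper's proof is the single sentence ``This follows immediately from Corollary \ref{LemCoupIneqAdapt2},'' and you have simply unpacked that sentence, including the symmetry argument needed to obtain the $\min(\tmix,\ttmix)$ and the passage from the total-variation bound to a bound on $|\tilde\pi(f)-\pi(f)|$ for $f\in[0,1]$.
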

\begin{proof}
This follows immediately from Corollary \ref{LemCoupIneqAdapt2}.
\end{proof}

We now give our main result. For reasons discussed in Section \ref{SecCompChoice}, the result uses two familiees of approximate kernels:

\begin{thm} [Convergence Rates for Approximate MCMC] \label{ThmTradeoff}
Let $K$ be a kernel with stationary distribution $\pi$ and mixing time $\tmix$. 

Fix  $N \in \mathbb{N}$. For $1 \leq n \leq N$ and $ 0 < c_{1}  < \infty$, let $\tilde{K}^{(n)}$ be an approximate kernel with cost $c(\tilde{K}^{(n)}) = n^{1+c_{1}}$, and let $\ttmix^{(n)}$, $\tilde{\pi}^{(n)}$ be the stationary distribution and mixing time of $\tilde{K}^{(n)}$. Also let $K^{(n)}$ be a reference kernel associated with parameter $n$, with stationary measure $\pi^{(n)}$ and mixing time $\tmix^{(n)}$. 

Assume that there exists $0 < C, C' < \infty$ so that 
\be \label{IneqScalingAssumptionTradeoff1}
C \tmix \leq \min_{1 \leq n \leq N} \tmix^{(n)} \leq \max_{1 \leq n \leq N} \tmix^{(n)} \leq C' \tmix.
\ee 
Fix a function $f \, : \, \Omega \mapsto [0,1]$, and assume that there exists $0 < c_{2}, C'', C''' < \infty$ so that 
\be \label{IneqScalingAssumptionTradeoff2}
\mathrm{Var}_{\pi^{(n)}}[f] \leq \frac{C''}{n},
\ee 

\be \label{IneqPerturbationAssumptionTradeoff}
\sup_{x \in \Omega} \| \tilde{K}^{(n)}( x,\cdot) - K^{(n)}(c,\cdot) \|_{\TV} \leq C''' \, n^{-c_{2}},
\ee 

and

\be  \label{IneqBiasAssumptionTradeoff}
| \pi(f) - \pi^{(n)}(f) | \leq C''' n^{-c_{2}}.
\ee

Fix a computational budget $M$. Then there exists a constant $n_{0} = n_{0}(C', C''', c_{2}, \tmix)$ so that for all $n_{0} \leq n \leq N$,
\be \label{IneqTradeoffInit}
\mathcal{E}_{M}(\tilde{K}^{(n)}, f) \leq 2 (C' \tmix)^{2} \big\lfloor \frac{M}{n^{1+c_{1}}} \big\rfloor^{-2} + 2 C' C'' \frac{\tmix}{n} \big\lfloor \frac{M}{n^{1+c_{1}}} \big\rfloor^{-1} + \frac{16}{9} (C' \tmix)^{2}  n^{-2 c_{2}} + C''' n^{-c_{2}},
\ee
where the error $\mathcal{E}_{M}$ given in equation \eqref{EqTotalError} is defined relative to the true kernel $K$.

Thus, if $n_{0} \leq M^{\frac{1}{2+c_{1}}}$, 

\be \label{IneqTradeoffMain}
\min_{1 \leq n \leq N} (\mathcal{E}_{M}(\tilde{K}^{(n)} f)) &\leq 2 (C' \tmix)^{2} \big\lfloor M^{\frac{1}{2 + c_{1}}} -1 \big\rfloor^{-2} + 2 C' C'' \tmix M^{-\frac{1}{2 + c_{1}}} \big\lfloor M^{\frac{1}{2 + c_{1}}} -1 \big\rfloor^{-1} \\
&+ (C''' + \frac{16}{9} (C' \tmix)^{2})  (M^{-\frac{2 c_{2}}{2 + c_{1}}} + 2).\\
\ee
\end{thm}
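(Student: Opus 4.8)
The plan is to bound separately the three error terms $\mathcal{E}_{M}^{(1)}(\tilde{K}^{(n)},f)$, $\mathcal{E}_{M}^{(2)}(\tilde{K}^{(n)},f)$ and $\mathcal{E}_{M}^{(3)}(\tilde{K}^{(n)},f)$ whose sum is $\mathcal{E}_{M}(\tilde{K}^{(n)},f)$ in \eqref{EqTotalError}, using Lemmas \ref{LemmaBurninBound}, \ref{LemmaVarianceBound} and \ref{LemmaBiasBound} respectively, and then to substitute the near-optimal value $n \approx M^{1/(2+c_{1})}$ into the resulting inequality. The lemmas for the burn-in and variance terms are stated in terms of the mixing time and stationary variance of the \emph{approximate} kernel $\tilde{K}^{(n)}$, whereas the hypotheses \eqref{IneqScalingAssumptionTradeoff1} and \eqref{IneqScalingAssumptionTradeoff2} control these quantities only for the \emph{reference} kernel $K^{(n)}$. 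The one genuine preliminary step is therefore to transfer control from $K^{(n)}$ to $\tilde{K}^{(n)}$ via perturbation theory, and it is this step that forces the threshold $n_{0}$.

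First I would carry out that transfer. Set $\delta_{n} = \sup_{x \in \Omega}\|\tilde{K}^{(n)}(x,\cdot) - K^{(n)}(x,\cdot)\|_{\TV}$, so $\delta_{n} \leq C'''n^{-c_{2}}$ by \eqref{IneqPerturbationAssumptionTradeoff}. Since $\tmix^{(n)} \leq C'\tmix$ by \eqref{IneqScalingAssumptionTradeoff1}, there is a constant $n_{0} = n_{0}(C',C''',c_{2},\tmix)$ (essentially $(128 C'C'''\tmix/9)^{1/c_{2}}$) so that $\delta_{n} < \tfrac{9}{128\,\tmix^{(n)}}$ for all $n \geq n_{0}$; Corollary \ref{LemCoupIneqAdapt2} then applies and gives, for $n \geq n_{0}$, both $\ttmix^{(n)} \leq 2\tmix^{(n)} \leq 2C'\tmix$ and $\|\tilde{\pi}^{(n)} - \pi^{(n)}\|_{\TV} \leq \tfrac{4}{3}\delta_{n}\tmix^{(n)} \leq \tfrac{4}{3}C'C'''\tmix\,n^{-c_{2}}$. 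From the last bound, together with $f, f^{2} \in [0,1]$ (which give $|\var_{\tilde{\pi}^{(n)}}(f) - \var_{\pi^{(n)}}(f)| \leq 3\|\tilde{\pi}^{(n)} - \pi^{(n)}\|_{\TV}$) and \eqref{IneqScalingAssumptionTradeoff2}, I get $\var_{\tilde{\pi}^{(n)}}(f) \leq \tfrac{C''}{n} + O(C'C'''\tmix\,n^{-c_{2}})$.

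Now the three bounds, all for $n \geq n_{0}$. For burn-in, Lemma \ref{LemmaBurninBound} applied to $\tilde{K}^{(n)}$ (cost $n^{1+c_{1}}$, mixing time at most $2C'\tmix$) bounds $\mathcal{E}_{M}^{(1)}(\tilde{K}^{(n)},f)$ by an absolute constant times $(C'\tmix)^{2}\lfloor M/n^{1+c_{1}}\rfloor^{-2}$. For the variance term, Lemma \ref{LemmaVarianceBound} gives $\mathcal{E}_{M}^{(2)}(\tilde{K}^{(n)},f) \leq 2\ttmix^{(n)}\var_{\tilde{\pi}^{(n)}}(f)\lfloor M/n^{1+c_{1}}\rfloor^{-1}$, which by the variance estimate of the previous paragraph is at most an absolute constant times $C'C''\tmix\,n^{-1}\lfloor M/n^{1+c_{1}}\rfloor^{-1}$ up to a lower-order contribution from the variance correction. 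For the asymptotic bias, I would write $\mathcal{E}_{M}^{(3)}(\tilde{K}^{(n)},f) = (\tilde{\pi}^{(n)}(f) - \pi(f))^{2}$ and split through $\pi^{(n)}(f)$ with the triangle inequality: the piece $(\tilde{\pi}^{(n)}(f) - \pi^{(n)}(f))^{2}$ is precisely the $\mathcal{E}_{M}^{(3)}$ term for the pair $(K^{(n)},\tilde{K}^{(n)})$, so Lemma \ref{LemmaBiasBound} bounds it by $\tfrac{16}{9}\delta_{n}^{2}\min(\tmix^{(n)},\ttmix^{(n)})^{2} \leq \tfrac{16}{9}(C'C'''\tmix)^{2}n^{-2c_{2}}$, and $(\pi^{(n)}(f) - \pi(f))^{2} \leq (C''')^{2}n^{-2c_{2}}$ by \eqref{IneqBiasAssumptionTradeoff}. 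Adding the three bounds and using $n^{-c_{2}} \leq 1$ to absorb lower-order pieces yields \eqref{IneqTradeoffInit}.

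Finally, to pass to \eqref{IneqTradeoffMain} I would evaluate the right side of \eqref{IneqTradeoffInit} at $n^{\ast} = \lceil M^{1/(2+c_{1})}\rceil$, which satisfies $n_{0} \leq n^{\ast}$ by the standing hypothesis $n_{0} \leq M^{1/(2+c_{1})}$; since then $M/(n^{\ast})^{1+c_{1}} \geq M^{1/(2+c_{1})} - 1$ and $(n^{\ast})^{-c_{2}} \leq M^{-c_{2}/(2+c_{1})}$, each term of \eqref{IneqTradeoffInit} turns into the corresponding term of \eqref{IneqTradeoffMain} after crude manipulation of the floors (the variance correction being of smaller order than $M^{-2c_{2}/(2+c_{1})}$ at this value of $n$). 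The main obstacle is not any one estimate but the structural point underlying Step 1: the hypotheses deliberately involve the auxiliary family $K^{(n)}$ rather than the true kernel $K$ (to which $\tilde{K}^{(n)}$ is \emph{not} close), and the proof must run the perturbation bound of Corollary \ref{LemCoupIneqAdapt2} to convert \eqref{IneqScalingAssumptionTradeoff1}--\eqref{IneqScalingAssumptionTradeoff2} into usable control of $\ttmix^{(n)}$ and $\var_{\tilde{\pi}^{(n)}}(f)$; everything afterwards is bookkeeping, the only care being to keep $n^{\ast}$ in range and to handle the floor functions.
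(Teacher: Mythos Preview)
Your proposal is correct and follows essentially the same route as the paper: bound each of $\mathcal{E}_{M}^{(1)},\mathcal{E}_{M}^{(2)},\mathcal{E}_{M}^{(3)}$ via Lemmas~\ref{LemmaBurninBound}, \ref{LemmaVarianceBound}, \ref{LemmaBiasBound}, use Corollary~\ref{LemCoupIneqAdapt2} (together with the choice of $n_{0}$) to pass from $\tmix^{(n)}$ to $\ttmix^{(n)}$, and then substitute $n\approx M^{1/(2+c_{1})}$. If anything, you are more careful than the paper in two places---you explicitly transfer the variance bound from $\pi^{(n)}$ to $\tilde{\pi}^{(n)}$ (the paper silently switches between them) and you track the factor of $2$ in $\ttmix^{(n)}\leq 2\tmix^{(n)}$ that the paper drops---so your argument actually tightens the bookkeeping without changing the structure.
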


\begin{remark}
We explain how Theorem \ref{ThmTradeoff} can be applied. For sufficiently nice Markov chains, we expect the constants $C,C',C'',C'''$ in the assumptions in Theorem \ref{ThmTradeoff} to be uniform in $N$, or change only like $\log(N)$. The constants $c_{1},c_{2}$ are more complicated, and will be analyzed in later sections. For small $N$, the bounds in Section \ref{SecJust} (and particularly Lemma \ref{LemmaSmallPerturbation}) gives conditions under which the assumptions  of Theorem \ref{ThmTradeoff} hold with $c_{1} < 1 + \epsilon$ for all $\epsilon > 0$ and with $c_{2} = \frac{1}{2}$. On the other hand, Lemma \ref{LemmaKernelLimits} says that they hold with $c_{1} = o(1)$, $c_{2}^{-1} = o(1)$ as $N$ goes to infinity for any fixed $M$. In the regime where  Lemma \ref{LemmaKernelLimits} applies, this indicates that the optimal number of subsamples $n^{\ast} = n A$ to use per step satisfies $n^{\ast} = O(\sqrt{M})$.   When the bounds in  Section \ref{SecJust} hold, this indicates that the optimal number of subsamples satisfies $n^{\ast} = O(M^{\frac{2}{3}} \mathrm{polylog}(M))$. 

In both situations, the decay rate obtained in Theorem \ref{ThmTradeoff} cannot be obtained by using a subsample of size proportional to $N$. In particular, due to the term $\mathcal{E}^{(1)}$, it is impossible to get a convergence bound better than $O( \lfloor \frac{M}{n} \rfloor^{-2})$. Thus, when Lemma \ref{LemmaKernelLimits} applies, we find that $n^{\ast} \approx \sqrt{M}$. In either case, we have $n^{\ast} \ll \min(M,N)$ in many situations. This provides a partial answer to the first part of question \ref{EnumThreeQuestions}, showing that under our assumptions
\be 
\tilde{K}^{\ast}  \neq K
\ee 
for $N$ sufficiently large.

We believe that, in general, $n^{\ast} \approx \sqrt{M}$ in both regimes. However, our existing bounds sometimes only allow us to show that $n^{\ast} \ll \min(M,N)$. 
\end{remark}

\subsection{ Choice of Comparison Chain} \label{SecCompChoice}

To use Theorem \ref{ThmTradeoff}, we need candidate kernels for $\tilde{K}^{(n)}$ and $K^{(n)}$. Before giving these candidates later in this section, it is worth explaining why Theorem \ref{ThmTradeoff} can't be applied directly to subsampling MCMC algorithms such as  Algorithm \ref{AlgGenMhNarrow}. The problem is simple: when the subsample size $n$ is small, the dynamics of Algorithm \ref{AlgGenMhNarrow} are not actually a small perturbation of the true dynamics $K$ in the sense required by the bounds in Section \ref{SecConvBounds}. This means that it is hopeless to try to apply perturbation bounds directly.

This problem is not limited to Algorithm \ref{AlgGenMhNarrow}. For example, a similar problem occurs with Algorithm \ref{AlgGenAustFrame}. Roughly speaking, the number of points $b$ that must be sampled per step must generally be on the order of $N$ as $N$ goes to infinity if the error $\| \tilde{K}(x,\cdot) - K(x,\cdot) \|_{\TV}$ is to be small. Our observation is also not new; the requirement that $b \approx N$ is discussed in Section 3.2 of \cite{BDC14} in the context of upper bounds on $b$, but it is clear that the upper bounds are often tight (see \textit{e.g.} the anti-concentration bounds in \cite{li2001gaussian} for a way to make this rigorous). \cite{quiroz2014speeding} discusses a similar phenomenon for other subsampling MCMC chains. Thus, existing approximate MCMC algorithms for which perturbation bounds apply often can't even take a single step in the regime $N \gg M$, and the `burn-in' error $\mathcal{E}^{(1)}$ is $O(1)$ in the regime $N \approx M$ (see equation \eqref{IneqBiasBasic}). 

This problem means that, if we wish to use the powerful tool of perturbation theory to analyze approximate transition kernels outside of the regime $M \gg N$, it is necessary to construct a sequence of interpolating kernels for the purpose of comparison.

Our candidates for $K^{(n)}$ and $\tilde{K}^{(n)}$ will be `wide-tailed' versions of the algorithms in Section \ref{SubsecApproxMhAlgs}. The first is Algorithm \ref{AlgMhWide}; we will write $\KMHW{n}$ for the kernel of this algorithm. Algorithm \ref{AlgMhWide} is a wide-tailed version of Algorithm \ref{AlgGenMH} - it differs only in that the stationary distribution is `flattened' by the change in the fourth line of the algorithm. See Lemma \ref{LemmaWideMhProps} for some relationships between Algorithms \ref{AlgMhWide} and \ref{AlgGenMH}.  

\begin{algorithm}[!ht] \caption{Metropolis-Hastings Algorithm; Wide Tails}
\label{AlgMhWide}
\begin{algorithmic} 
\STATE Initialize $X_{1} = x$.
\FOR  {$t = 1$ to $T$}
\STATE Generate $\ell_{t+1} \sim L(X_{t}, \cdot)$, $u_{t} \sim \mathbb{U}[0,1]$.
\STATE Set $\psi(u_{t},X_{t},\ell_{t+1}) = \frac{1}{n} \log(u_{t} \frac{p(X_{t}) L(X_{t},\ell_{t+1})}{ p(\ell_{t+1}) L(\ell_{t+1},X_{t})})$, $\Lambda(X_{t},\ell_{t+1}) = \frac{1}{N} \sum_{i=1}^{N} \log(\frac{p(x_{i} | \ell_{t+1})}{p(x_{i} | X_{t})})$.
\IF {$\Lambda(X_{t},\ell_{t+1}) > \psi(u_{t},X_{t},\ell_{t+1})$}
\STATE  set $X_{t+1} = \ell_{t+1}$.
 \ELSE
\STATE
 set $X_{t+1} = X_{t}$.
\ENDIF
\ENDFOR
\end{algorithmic}
\end{algorithm}

The analogous `wide-tailed' version of Algorithm \ref{AlgGenMhNarrow} is given by Algorithm \ref{AlgGenMhWide} below.  We write $\KSW{C}{n}$ for  the kernel associated with Algorithm \ref{AlgGenMhWide}. It is generally not hard to write down `wide-tailed' versions of other approximate-MCMC algorithms, such as Algorithm \ref{AlgGenAustFrame}.

\begin{algorithm}[!ht] \caption{Simpler Approximate Metropolis-Hastings Algorithm; Wide Tails}
\label{AlgGenMhWide}
\begin{algorithmic} 
\STATE Initialize $X_{1} = x$.
\FOR{ $t = 1$ to $T$}

\STATE Generate $\ell_{t+1} \sim L(X_{t}, \cdot)$ and $u_{t} \sim \mathbb{U}[0,1]$. 
\STATE sample $x_{1}^{\ast},\ldots,x_{C n}^{\ast}$ without replacement from $\mathcal{X}$, set $\psi(u_{t},X_{t},\ell_{t+1}) = \frac{1}{n} \log(u_{t} \frac{p(X_{t}) L(X_{t},\ell_{t+1})}{ p(\ell_{t+1}) L(\ell_{t+1},X_{t})})$, and set $\Lambda^{\ast}(X_{t},\ell_{t+1}) = \frac{1}{C n} \sum_{i=1}^{C n} \log(\frac{p(x_{i}^{\ast} | \ell_{t+1})}{p(x_{i}^{\ast} | X_{t})})$.
\IF {$\Lambda^{\ast}(X_{t},\ell_{t+1}) > \psi(u_{t},X_{t},\ell_{t+1})$}
\STATE  set $X_{t+1} = \ell_{t+1}$.
 \ELSE
\STATE
 set $X_{t+1} = X_{t}$.
\ENDIF
\ENDFOR
\end{algorithmic}
\end{algorithm}

\subsection{Justification for Tradeoff Bounds} \label{SecJust}

We give bounds on the main quantities that appear in the assumptions of Theorem \ref{ThmTradeoff}, providing bounds of the form \eqref{IneqScalingAssumptionTradeoff1}, \eqref{IneqScalingAssumptionTradeoff2}, \eqref{IneqPerturbationAssumptionTradeoff} and  \eqref{IneqBiasAssumptionTradeoff}. In particular, we show that the approximate wide-tailed Metropolis-Hastings kernel  $\KSW{A}{n}$ is often a small perturbation of the wide-tailed Metropolis-Hastings kernel $\KMHW{n}$, and we show that the wide-tailed Metropolis-Hastings kernel $\KMHW{n}$ can inherit good properties from the usual Metropolis-Hastings kernel $\KMH$ (even though it is generally not a small perturbation).

We first show that $\KSW{A}{n}$ is close to $\KMHW{n}$, which can be used to obtain a bound of the form \eqref{IneqPerturbationAssumptionTradeoff}:

\begin{lemma} \label{LemmaSmallPerturbation} 
Fix $N, A,n \in \mathbb{N}$ with $1 \leq A \leq N$ and $1 \leq n \leq \frac{N}{A}$. Let $\KSW{A}{n}$ and $\KMHW{n}$ be the kernels associated with Algorithms \ref{AlgGenMhWide} and \ref{AlgMhWide} respectively. Assume that 
\be
\max_{1 \leq i \leq n}  \sup_{\theta \in \Omega} | \log(p(\theta  | x_{i} )) | \leq C \\
\ee
for some $0 < C < \infty$. Then for $A = A(n) \geq 4 C^{2} n \log(n)$, 
\be 
\sup_{x \in \Omega} \| \KSW{A}{n}(x,\cdot) - \KMHW{n}(x,\cdot) \|_{\TV} \leq \frac{2}{\sqrt{n}}.
\ee 
\end{lemma}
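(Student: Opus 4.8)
The plan is to compare the two algorithms step by step using a maximal coupling, and to show that the only way they can disagree is if the subsampled estimate $\Lambda^{\ast}$ used in Algorithm \ref{AlgGenMhWide} lands on the opposite side of the threshold $\psi(u_{t}, X_{t}, \ell_{t+1})$ from the true average $\Lambda(X_{t}, \ell_{t+1})$ used in Algorithm \ref{AlgMhWide}. First I would fix a state $x$, generate a common proposal $\ell \sim L(x,\cdot)$ and a common uniform $u$, and express the acceptance indicator in each chain as $\mathbf{1}\{\Lambda > \psi\}$ versus $\mathbf{1}\{\Lambda^{\ast} > \psi\}$ (with $\psi$ rescaled by $\tfrac1n$ in both, since both are wide-tailed). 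Conditioning on $(\ell, u)$, the total variation distance between the one-step kernels is bounded by $\P[\,\mathbf{1}\{\Lambda > \psi\} \neq \mathbf{1}\{\Lambda^{\ast} > \psi\}\,]$, which in turn is at most $\P[\,|\Lambda^{\ast} - \Lambda| \geq |\Lambda - \psi|\,]$; after integrating out $(\ell,u)$ this reduces to controlling the tail of the subsampling error $|\Lambda^{\ast} - \Lambda|$.

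Next I would invoke a concentration inequality for sampling without replacement. Here $\Lambda^{\ast} = \tfrac{1}{An}\sum_{i=1}^{An}\log\tfrac{p(x_i^{\ast}|\ell)}{p(x_i^{\ast}|x)}$ is the empirical mean of a without-replacement subsample of size $An$ from the $N$ numbers $\log\tfrac{p(x_i|\ell)}{p(x_i|x)}$, whose full average is $\Lambda$. By the boundedness hypothesis each summand lies in an interval of length at most $2C$ (uniformly in $\theta$, hence in both $x$ and $\ell$), so Hoeffding–Serfling (the without-replacement Hoeffding bound) gives $\P[|\Lambda^{\ast} - \Lambda| \geq t] \leq 2\exp(-\tfrac{An\, t^2}{2C^2})$, possibly with a mild improvement from the finite-population correction that I would not bother to track. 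Choosing the target accuracy $t = \tfrac{1}{\sqrt n}$ and the threshold $A \geq 4C^2 n\log(n)$ makes this probability at most $2 n^{-2An/(2C^2)} \cdot(\cdots)$; more carefully, $\exp(-\tfrac{An}{2C^2}\cdot\tfrac1n) = \exp(-\tfrac{A}{2C^2}) \leq \exp(-2\log n) = n^{-2}$, so the bad event in which the subsampled estimate is more than $\tfrac{1}{\sqrt n}$ from $\Lambda$ has probability at most $\tfrac{2}{n^2} \leq \tfrac{2}{\sqrt n}$ for $n\geq 1$. (The interplay between the two "$\tfrac{1}{\sqrt n}$" scales — one as the accuracy demanded of $\Lambda^{\ast}$, one as the final bound — is why the stated constant comes out clean.)

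The remaining point, and the main obstacle, is that being within $\tfrac{1}{\sqrt n}$ of $\Lambda$ is not the same as agreeing on which side of $\psi$ we are; the two indicators differ exactly when $\psi$ falls in the window of width $\lesssim \tfrac{1}{\sqrt n}$ around $\Lambda$. I would handle this by a union bound: the disagreement probability is at most $\P[|\Lambda^{\ast}-\Lambda| \geq \tfrac{1}{\sqrt n}] + \P[|\Lambda - \psi| < \tfrac{1}{\sqrt n}]$, and I would argue the second term is itself $O(\tfrac{1}{\sqrt n})$. Here one uses that $\psi = \psi(u,x,\ell) = \tfrac1n\log u + (\text{something independent of }u)$, so for fixed $(x,\ell)$ the map $u\mapsto \psi$ is a smooth bijection onto $(-\infty,\,\tfrac1n\log\frac{p(x)L(x,\ell)}{p(\ell)L(\ell,x)})$ with derivative $\tfrac{1}{nu}$; the event $|\Lambda-\psi|<\tfrac{1}{\sqrt n}$ pulls back to a $u$-interval of length governed by $n e^{n\Lambda'}$ for the relevant $\Lambda'$, which under the boundedness hypothesis is controlled. (In the typical presentation this step is folded into the statement by restricting attention to $(x,\ell,u)$ for which the true accept/reject decision is not "borderline", or by absorbing it into the constant; I would check the paper's precise convention, but either way the contribution is of the same order $\tfrac{1}{\sqrt n}$ and is swallowed by adjusting the constant $4$ in the hypothesis on $A$ if necessary.) Taking the supremum over $x\in\Omega$ at the end costs nothing since every bound above was uniform in $x$, $\ell$, and $u$.
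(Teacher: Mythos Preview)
Your overall strategy matches the paper's exactly: couple the two kernels through a common proposal $\ell$ and common uniform $u$, bound the disagreement probability by $\P[\,|\Lambda^{\ast}-\Lambda|\geq |\Lambda-\psi|\,]$, apply a Hoeffding--Serfling inequality for sampling without replacement (the paper cites Theorem~1 of \cite{bardenet2015concentration}), and split into a ``concentration'' piece and a ``borderline'' piece handled by anti-concentration of $\log u$. The paper packages the last step as a separate lemma stating $\P[\log U\in[a,b]]\leq\tfrac{3e}{2}(b-a)$ for $|b-a|\leq 1$, which is precisely your pullback computation.

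The genuine gap is your choice of splitting threshold $t=\tfrac{1}{\sqrt n}$. Because $\psi=\tfrac{1}{n}\log u+\text{(const)}$, the borderline event $|\Lambda-\psi|<\tfrac{1}{\sqrt n}$ becomes $|\log u - c|<n\cdot\tfrac{1}{\sqrt n}=\sqrt n$, an interval of length $2\sqrt n$ for $\log u$; its probability is not $O(n^{-1/2})$ but $O(1)$, and no boundedness of the log-likelihoods can repair this (your ``$ne^{n\Lambda'}$'' estimate is in fact the problem, not the solution). The paper instead takes the threshold $\epsilon=C\sqrt{\log(n)/(An)}$, so that the relevant interval for $\log u$ has length $2n\epsilon=2C\sqrt{n\log(n)/A}$; the hypothesis $A\geq 4C^{2}n\log n$ is precisely what forces $2n\epsilon\leq 1$ so that the anti-concentration lemma applies. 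At this smaller $\epsilon$ the Hoeffding term $\exp(-C^{-2}An\epsilon^{2})=\exp(-\log n)$ is still polynomially small, which is why both pieces can be controlled simultaneously. In short, the missing idea is that the $\tfrac{1}{n}$ in front of $\log u$ forces you to demand accuracy of order $\tfrac{1}{n}$ (not $\tfrac{1}{\sqrt n}$) from $\Lambda^{\ast}$, and the role of the large subsample size $An\sim n^{2}\log n$ is to deliver Hoeffding concentration at that finer scale.
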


\begin{remark}
This gives a bound of the form \eqref{IneqPerturbationAssumptionTradeoff}.
\end{remark}

Our next bound will let us show that $\KMHW{n}$ inherits good properties From $\KMH$, allowing us to prove bounds of the form \eqref{IneqScalingAssumptionTradeoff1}. We need some additional technical assumptions:

\begin{assumption} [Convergence of Log-Concave Distributions] \label{DefConvDist}

Let $\{ \phi_{n} \}_{n \in \mathbb{N}}$ be a sequence of density functions on $\mathbb{R}^{d}$ and $\{ s_{n} \}_{n \in \mathbb{N}}$ a sequence of positive numbers. Assume:
\begin{enumerate}
\item Each density function $\phi_{n}$ is log-concave.
\item $\lim_{n \rightarrow \infty} s_{n} = \infty$.
\item There exists a density $\phi$ so that $\phi_{n}(s_{n} \, \cdot) \rightarrow_{n} \phi(\cdot)$ in distribution.
\item $\phi$ is a continuous function on its support.
\item For any compact set $\mathcal{X} \subset \Theta$, $c(\mathcal{X}) \equiv \inf_{x \in \mathcal{X}} \phi(x) > 0$.
\end{enumerate}
\end{assumption}

\begin{remark}
This assumption will be applied to the stationary distribution of the kernels $\KMHW{n}$, which are of the form $\phi_{n}(\cdot) = p(\cdot | (x_{1},\ldots,x_{n}))^{\frac{1}{m(n)}}$, where $\{ m(n) \}_{ n \in \mathbb{N}}$ is some sequence of integers.  

This assumption holds with probability 1 for many sequences of this form. The log-concavity condition is strong but easy to check (it holds, \textit{e.g.}, if the model $p(\theta | x)$ and the prior $p(\theta)$ are both Gaussian). The remainder of the assumption follows from an application of the Bernstein-Von Mises theorem, whenever the Bernstein-Von Mises theorem holds and $\{ m(n) \}_{n \in \mathbb{N}}$ satisfies $\lim_{n \rightarrow \infty} \frac{m(n)}{n} = 0$. Note that stating our assumption in this way means that it applies to models with posteriors that are not asymptotically Gaussian.
\end{remark}





We can now state our convergence result (we suspect that this result is known, but could not find it in the literature):

\begin{theorem} [Convergence of Markov Chain Paths] \label{ThmUnifConv}
Let $\{ \phi_{n} \}_{n \in \mathbb{N}}$, $\{ s_{n} \}_{n \in \mathbb{N}}$ and $\phi$ satisfy assumption \ref{DefConvDist}. Fix a symmetric proposal kernel $L$ on $\Theta$ and define the kernel $L_{n}(x,A) = L(s_{n} x, s_{n} A)$ for all $x \in \Theta$, $A \subset \Theta$. Let $K$ be the Metropolis-Hastings kernel with proposal distribution $L$ and target distribution $\phi$. Let $K_{n}$ be the Metropolis-Hastings kernel with proposal distribution $L_{n}$ and target distribution $\phi_{n}$. Finally, fix $T \in \mathbb{N}$ and let $\{X_{t}\}_{t=0}^{T}$, $\{Y_{t}\}_{t=0}^{T}$ be Markov chains drawn from kernels $K_{n}$ and $K$ respectively, started at points $\frac{x}{s_{n}}$ and $x$ respectively. Then
\be 
\lim_{N_{0} \rightarrow \infty} \sup_{N_{0} \leq n} |\P[(s_{n} X_{0},\ldots,s_{n} X_{T}) \in A] - \P[(Y_{0},\ldots,Y_{0}) \in A] | = 0
\ee 
for any $A \in \Theta^{T+1}$.
\end{theorem}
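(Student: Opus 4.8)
The plan is to rescale space so that both chains become Metropolis--Hastings chains driven by the \emph{same} proposal mechanism, and then to run a single-proposal coupling over the finitely many steps $0,\ldots,T$. First I would set $\hat{X}_t := s_n X_t$, so that $\hat{X}_0 = x = Y_0$. Since $L$ is symmetric, so is $L_n$ (the change of variables $y \mapsto s_n y$ has constant Jacobian), and hence the Metropolis--Hastings acceptance ratio of $K_n$ from $u$ to $v$ is simply $\phi_n(v)/\phi_n(u)$. A direct change of variables then shows that $\{\hat{X}_t\}_{t\ge 0}$ is again a Markov chain: from a state $z$ it proposes $w \sim L(z,\cdot)$ --- the \emph{same} proposal law used by $\{Y_t\}$ from $z$, and independent of $n$ --- accepts with probability $a_n(z,w) := \min\bigl(1, g_n(w)/g_n(z)\bigr)$, and otherwise holds, where $g_n$ denotes the density of the target of $K_n$ expressed in the rescaled coordinates; being a linear image of a log-concave density, $g_n$ is log-concave, and by Assumption \ref{DefConvDist} it converges weakly to $\phi$. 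The chain $\{Y_t\}$ is the Metropolis--Hastings chain with proposal $L$ and acceptance probability $a(z,w) := \min\bigl(1, \phi(w)/\phi(z)\bigr)$. Thus the problem reduces to a continuity statement: the law of the first $T$ steps of a Metropolis--Hastings chain should depend continuously on its target, in the topology of local uniform convergence of densities.

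The key analytic input is that a sequence of log-concave probability densities that converges weakly also converges \emph{uniformly on compact subsets of the interior of the support of the limit}. By Assumption \ref{DefConvDist} the limit $\phi$ is log-concave (weak limits of log-concave densities are log-concave), continuous, and --- by part (5) of the assumption --- bounded away from $0$ on every compact subset of $\Theta$, so its support is all of $\Theta$; hence $g_n \to \phi$ uniformly on every compact $B \subset \Theta$. (One proves this fact by combining pointwise convergence of the distribution functions with the local Lipschitz control that log-concavity imposes on $\log g_n$, together with Arzel\`a--Ascoli; it is standard in the log-concave estimation literature.) Since $\phi > 0$ everywhere, it follows that for every compact $B \subset \Theta$
\be
\lim_{n \to \infty} \ \sup_{z,w \in B} \ |a_n(z,w) - a(z,w)| = 0 .
\ee

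Now I would couple $\{\hat{X}_t\}$ and $\{Y_t\}$ as follows: while $\hat{X}_t = Y_t = z$, draw one proposal $w \sim L(z,\cdot)$ and one uniform $U$ and move each chain to $w$ or keep it at $z$ according to its own acceptance probability and this common $U$; once the chains separate, let them evolve independently. When both chains sit at $z$ and propose $w$, they disagree at the next step with probability exactly $|a_n(z,w)-a(z,w)|$. Fix $\varepsilon > 0$. Because the proposal kernel $L$ does not depend on $n$ and both chains start at the common fixed point $x$, the finite collection of states occupied and proposed by the coupled pair up to time $T$ is tight uniformly in $n$ (each occupied state lies among $\{x, W_0,\ldots,W_{t-1}\}$ and each proposal $W_j$ is drawn from $L$ at an already-tight location), so there is a compact $B = B(x,T,\varepsilon) \subset \Theta$ containing every proposed and occupied state up to time $T$ with probability at least $1-\varepsilon$, uniformly in $n$. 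On that event the chance that the two chains ever disagree before time $T$ is at most $T \sup_{z,w \in B}|a_n(z,w)-a(z,w)|$, which tends to $0$ by the previous paragraph. Hence $\limsup_{n\to\infty}\P[(\hat{X}_0,\ldots,\hat{X}_T) \neq (Y_0,\ldots,Y_T)] \le \varepsilon$, and since $\varepsilon$ was arbitrary, $\|\mathcal{L}(s_n X_0,\ldots,s_n X_T) - \mathcal{L}(Y_0,\ldots,Y_T)\|_{\TV} \to 0$; this gives the claim for every measurable $A$ (not merely continuity sets), and the stated $\sup_{n \ge N_0}$ form is just a restatement of the convergence of this sequence.

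The one genuinely non-routine point is the second paragraph: upgrading weak convergence of the rescaled targets to \emph{uniform} convergence of the Metropolis ratios on compacts. Weak convergence by itself is far too weak --- the acceptance ratio is a pointwise object --- and this is exactly where the log-concavity hypothesis built into Assumption \ref{DefConvDist} is indispensable; the danger that $\phi$ degenerates (so that the ratios blow up, or the relevant compacts drift to the boundary of the support) is excluded by part (5) of that assumption, which keeps $\phi$ bounded away from $0$ on compacts. Everything else --- the change of variables, the finite-horizon coupling, and the tightness of the proposal chain over a bounded number of steps --- is insensitive to $n$ and entirely routine.
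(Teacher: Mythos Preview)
Your proof is correct and follows essentially the same route as the paper's: both rescale so that the two chains share the common proposal $L$, both use the uniform-on-compacts convergence of log-concave densities (the paper cites Theorem~2 of \cite{cule2010theoretical} for exactly the step you flag as the one non-routine point) to get $\sup_{z,w\in B}|a_n(z,w)-a(z,w)|\to 0$, and both finish with a common-proposal coupling combined with a tightness argument to restrict attention to a fixed compact set over the finite horizon $T$. Your write-up is, if anything, a bit more explicit about the tightness step than the paper's.
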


\begin{cor}
Set notation as in Theorem \ref{ThmUnifConv}. Let $\tau_{n}$ be the mixing time of $K_{n}$ and let $\tau$ be the mixing time of $K$. For fixed $A \subset \Theta$, let $\tau_{n}(A)$ be the mixing time of the trace of $K_{n}$ on $\frac{A}{s_{n}}$ and let $\tau(A)$ be the mixing time of the trace of $K$ on $A$. Then
\be 
\lim_{n \rightarrow \infty} \tau_{n} &= \tau \\
\lim_{n \rightarrow \infty} \tau_{n}(A) &= \tau(A). \\
\ee  
\end{cor}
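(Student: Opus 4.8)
The plan is to derive the corollary about mixing times directly from the path-space convergence in Theorem \ref{ThmUnifConv}. The key observation is that the (total-variation) distance to stationarity at time $t$ is a function only of the law of the first $t+1$ coordinates of the chain, and this in turn is captured by the path-space statements in Theorem \ref{ThmUnifConv}. First I would record the rescaling bookkeeping: if $\{X_t\}$ runs according to $K_n$ started at $x/s_n$, then $\{s_n X_t\}$ runs according to the kernel $\tilde{K}_n(\cdot,\cdot)$ obtained by pushing $K_n$ forward under the bijection $x \mapsto s_n x$, with stationary distribution $\phi_n(s_n \cdot)$; since TV distance is invariant under bijections, the mixing time of $K_n$ equals the mixing time of this rescaled chain. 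So it suffices to prove $\tau(\text{rescaled } K_n) \to \tau(K)$.

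For the first limit, I would argue convergence of the TV distance to stationarity at each fixed time $t$. Fix $t$. For any starting point $x$, Theorem \ref{ThmUnifConv} (applied with $T = t$, and separately with the chain started from (a discretization of) the stationary distribution $\phi_n(s_n\cdot)$, which converges to $\phi$ by assumption \ref{DefConvDist}(3)) gives $\mathcal{L}(s_n X_t) \to \mathcal{L}(Y_t)$ in distribution, uniformly in $n \geq N_0$ as $N_0 \to \infty$. The subtlety is that weak convergence does not immediately give TV convergence; here I would use assumption \ref{DefConvDist}(1), (4), (5): log-concavity of $\phi_n$ plus the local lower bound on $\phi$ control the densities well enough — in fact the one-step kernel $K$ from any point has an absolutely continuous component whose density is continuous, and the same holds uniformly for $K_n$ after rescaling, so that weak convergence of the laws upgrades to TV convergence of $\mathcal{L}(s_nX_t)$ to $\mathcal{L}(Y_t)$ and of the stationary laws. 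Combining, $\|\mathcal{L}(s_nX_t) - \phi_n(s_n\cdot)\|_{\TV} \to \|\mathcal{L}(Y_t) - \phi\|_{\TV}$, uniformly over $n$ large and — with a bit more care, using that $\Theta$ is nice and the convergence in Theorem \ref{ThmUnifConv} is uniform in the event $A$ — uniformly over starting points in the sense needed for the $\sup$ in the definition of $\tmix$. Since $\tau$ is the first time this $\sup$-distance drops below $1/4$ and the limiting quantity is strictly monotone through the value $1/4$ (or, if it is not, one perturbs by an arbitrarily small amount), the first time the $n$-th quantity crosses $1/4$ converges to the first time the limiting one does, giving $\tau_n \to \tau$.

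For the second limit I would repeat the same argument applied to the trace chains. The point is that the trace of $\{s_n X_t\}$ onto $A$ is a measurable function of the full path $(s_nX_0, s_nX_1, \ldots)$: the $k$-th coordinate of the trace equals $s_nX_{t}$ where $t$ is the time of the $k$-th visit to $A$. Hence the law of the first $k$ coordinates of the trace, up to any fixed time horizon, is determined by the law of a fixed-length prefix of the full path intersected with the (countably many) events describing visit times, all of which are controlled by Theorem \ref{ThmUnifConv}. One mild technical point is that one must restrict attention to paths that visit $A$ at least $k$ times within the horizon; since $\phi(A) > 0$ (by assumption \ref{DefConvDist}(5), $A$ compact has positive mass) and the chain is recurrent, the probability of few visits in a long horizon is small uniformly in $n$, so the contribution of the exceptional event is negligible. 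With that in hand the same weak-to-TV upgrade and monotonicity-through-$1/4$ argument yields $\tau_n(A) \to \tau(A)$.

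The main obstacle I anticipate is the upgrade from weak convergence (which is all Theorem \ref{ThmUnifConv} directly provides, since its conclusion is stated for events $A$ in a product space and effectively gives convergence of finite-dimensional distributions) to total-variation convergence, which is what the definition of mixing time requires. This is where assumptions \ref{DefConvDist}(1), (4), (5) must be used in an essential way: log-concavity and continuity of the limiting density, together with the uniform lower bound on compacts, are exactly what let one show that the laws $\mathcal{L}(s_nX_t)$ are, for each fixed $t \geq 1$, uniformly absolutely continuous with uniformly equicontinuous densities on compacts, so that weak convergence implies TV convergence. A secondary obstacle is handling the $\sup$ over starting points in the definition of $\tmix$ uniformly; this requires that Theorem \ref{ThmUnifConv}'s uniformity be strong enough to be applied at the worst-case starting point, which one gets either by noting that the relevant $\sup$ is attained (or nearly attained) on a compact set after rescaling, or by an explicit drift/compactification argument using log-concavity of $\phi_n$.
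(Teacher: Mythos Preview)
The paper supplies no proof of this corollary; it is stated immediately after Theorem~\ref{ThmUnifConv} and treated as a direct consequence. So there is nothing to compare against beyond the implicit claim that the corollary is ``immediate.'' Your outline is considerably more careful than that, and broadly on the right track, but two points deserve comment.

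First, your chief worry --- upgrading weak convergence to TV convergence --- is less severe than you suggest. The conclusion of Theorem~\ref{ThmUnifConv} is stated for \emph{every} measurable event $A \subset \Theta^{T+1}$, which is setwise convergence, already strictly stronger than weak convergence. More to the point, the \emph{proof} of Theorem~\ref{ThmUnifConv} in the appendix actually shows that $\P[(s_nX_0,\ldots,s_nX_T)=(Y_0,\ldots,Y_T)] \to 1$ under a suitable coupling, i.e.\ TV convergence of the path laws for each fixed starting point. So the machinery you propose for the weak-to-TV upgrade (log-concavity, equicontinuity of densities, etc.) is unnecessary: the theorem's proof already delivers what you need at this step.

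Second, your concern about the $\sup$ over starting points is genuine, and the paper does not address it. The proof of Theorem~\ref{ThmUnifConv} establishes uniform convergence of the acceptance probabilities only on \emph{compact} sets $\mathcal{C}$, and the coupling construction is correspondingly uniform only over starting points in compacts. If $\Theta$ is noncompact, passing to the $\sup_{x \in \Omega}$ in the definition of $\tau_{\mathrm{mix}}$ requires an additional argument (e.g.\ that the worst-case starting point is effectively in a compact set, via a drift condition uniform in $n$). Your sketch of how to handle this via log-concavity is plausible but would need to be made precise; the paper simply elides this issue. The same caveat applies to the trace-chain statement, where one also needs the number of visits to $A$ in a fixed horizon to be uniformly controlled, as you correctly note.
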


\begin{remark}
This corollary can be applied for chains $K_{n} = \KMHW{n}$  to obtain a bound of the form \eqref{IneqScalingAssumptionTradeoff1}. In the common situation that the mixing time $\tau$ in this corollary is infinite but $\tau(A)$ is finite for all compact sets $A$, this corollary gives a useful bound of the form \eqref{IneqScalingAssumptionTradeoff1} for a truncated copy of $\KMHW{n}$.
\end{remark}

It is often easy to obtain bounds of the form \eqref{IneqScalingAssumptionTradeoff2}, \eqref{IneqBiasAssumptionTradeoff} directly based on the form of the posterior:

\begin{lemma} \label{LemmaWideMhProps} 
Let $\KMH$ and $\KMHW{n}$ be the kernels associated with Algorithms \ref{AlgGenMH} and \ref{AlgMhWide},  and let $\PMH$ and $\PMHW{n}$ be their stationary distributions. 
\be \label{EqWideMhStat}
\PMHW{n}(\theta) \propto p(\theta) \prod_{i=1}^{N} p(\theta | x_{i})^{\frac{n}{N}}.
\ee 
\end{lemma}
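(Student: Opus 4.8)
The plan is to identify the stationary distributions of Algorithms \ref{AlgGenMH} and \ref{AlgMhWide} by reading off the acceptance ratios from the algorithm descriptions. First I would note that Algorithm \ref{AlgGenMH} accepts the proposal $\ell_{t+1}$ precisely when $\Lambda(X_t,\ell_{t+1}) > \psi(u_t,X_t,\ell_{t+1})$, that is, when
\be
\frac{1}{N} \sum_{i=1}^{N} \log\Big(\frac{p(x_i \mid \ell_{t+1})}{p(x_i \mid X_t)}\Big) > \frac{1}{N} \log\Big( u_t \, \frac{p(X_t) L(X_t,\ell_{t+1})}{p(\ell_{t+1}) L(\ell_{t+1},X_t)} \Big).
\ee
Exponentiating and rearranging, this is exactly the event $u_t < \frac{p(\ell_{t+1}) L(\ell_{t+1},X_t)}{p(X_t) L(X_t,\ell_{t+1})} \prod_{i=1}^N \frac{p(x_i \mid \ell_{t+1})}{p(x_i \mid X_t)}$, so since $u_t \sim \mathbb{U}[0,1]$ the acceptance probability is $\min\big(1, \frac{\pi(\ell_{t+1}) L(\ell_{t+1},X_t)}{\pi(X_t) L(X_t,\ell_{t+1})}\big)$ with $\pi(\theta) = p(\theta)\prod_i p(\theta\mid x_i)$ as in \eqref{EqDefPostDist}. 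This is the standard Metropolis-Hastings acceptance rule, so $\KMH$ is reversible with respect to $\pi$, recovering the (already implicit) fact that $\PMH = \pi$.

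Next I would repeat the computation for Algorithm \ref{AlgMhWide}. The only change is that $\psi$ now carries a factor $\frac{1}{n}$ instead of $\frac{1}{N}$, while $\Lambda$ is unchanged with its $\frac{1}{N}$. So the acceptance event becomes
\be
\frac{1}{N} \sum_{i=1}^{N} \log\Big(\frac{p(x_i \mid \ell_{t+1})}{p(x_i \mid X_t)}\Big) > \frac{1}{n} \log\Big( u_t \, \frac{p(X_t) L(X_t,\ell_{t+1})}{p(\ell_{t+1}) L(\ell_{t+1},X_t)} \Big).
\ee
Multiplying through by $n$ and exponentiating, the acceptance probability is $\min\big(1, \frac{p(\ell_{t+1}) L(\ell_{t+1},X_t)}{p(X_t) L(X_t,\ell_{t+1})} \big(\prod_{i=1}^N \frac{p(x_i \mid \ell_{t+1})}{p(x_i \mid X_t)}\big)^{n/N}\big)$. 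Defining $\tilde\pi(\theta) \propto p(\theta)^{?}\prod_i p(\theta\mid x_i)^{n/N}$, one checks that this is the Metropolis-Hastings acceptance ratio for $\tilde\pi$; here the prior factor $p(\theta)$ appears to the first power (it lives inside the $\psi$ term, which gets multiplied by $n$ then divided by $n$), matching the claimed form \eqref{EqWideMhStat}. Then the detailed-balance verification $\tilde\pi(x) L(x,y) \min(1, r) = \tilde\pi(y) L(y,x) \min(1, r^{-1})$ with $r = \frac{\tilde\pi(y)L(y,x)}{\tilde\pi(x)L(x,y)}$ is routine (using reversibility of $L$), and since the self-loop part of the kernel trivially satisfies detailed balance, $\KMHW{n}$ is reversible with respect to $\tilde\pi = \PMHW{n}$.

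There is essentially no obstacle here: the statement is a direct bookkeeping exercise. The one mild subtlety worth being careful about is tracking which power the prior $p(\theta)$ ends up with — because $p(X_t)$ and $p(\ell_{t+1})$ sit inside the $\frac{1}{n}\log(\cdots)$ expression in line 4 of Algorithm \ref{AlgMhWide}, multiplying by $n$ during exponentiation returns the prior to power $1$, not power $n/N$; only the data likelihoods $p(x_i\mid\theta)$, which come through the separate $\Lambda$ term scaled by $\frac{1}{N}$, get raised to $\frac{n}{N}$. This gives exactly \eqref{EqWideMhStat}. One should also note for completeness that the proportionality constant in \eqref{EqWideMhStat} is finite (so that $\PMHW{n}$ is a genuine probability measure); this is immediate whenever $\pi$ itself is proper and the exponent $n/N \le 1$, since then $\prod_i p(\theta\mid x_i)^{n/N}$ is dominated by a constant times $\prod_i p(\theta\mid x_i)$ on the tails after a Hölder-type argument, or can simply be assumed as a standing regularity hypothesis on the model.
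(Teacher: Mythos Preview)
Your proof is correct and follows exactly the approach the paper has in mind; the paper's own proof is the single sentence ``This is clear from the definition of the Metropolis-Hastings chain $\KMHW{n}$,'' and you have simply written out the bookkeeping that makes this clear. One small caveat: in your closing remark about integrability of $\PMHW{n}$, the H\"older-type domination goes the wrong way --- since $n/N \le 1$, the tempered likelihood $\prod_i p(\theta\mid x_i)^{n/N}$ is \emph{larger} than $\prod_i p(\theta\mid x_i)$ wherever the latter is below $1$ (i.e.\ in the tails), so properness of $\pi$ does not by itself imply properness of $\PMHW{n}$. Your fallback of treating it as a standing regularity assumption is the right move, and is what the paper implicitly does as well.
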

\begin{proof}
This is clear from the definition of the Metropolis-Hastings chain $\KMHW{n}$.
\end{proof}

\section{Other Analyses and Future Work} \label{SecBigMisc}
We discuss the sharpness of our bounds and mention some area for future work.

\subsection{Large-Data Limit}

We introduce an algorithm that serves as a `large-data limit' of the kernel $\KSW{1}{n}$ defined by Algorithm \ref{AlgGenMhWide}, and discuss how this algorithm is related to our convergence bounds in Theorem \ref{ThmTradeoff} as well as a popular observation in the approximate MCMC community.

Fix the `true' parameter value $\theta^{\ast} \in \Theta$ and define the algorithm:

\begin{algorithm}[!ht] \caption{Infinite-Data Resampling MCMC}
\label{AlgGenMhInfinite}
\begin{algorithmic} 
\STATE Initialize $X_{1} = x$.
\FOR {$t = 1$ to $T$}
\STATE Generate $\ell_{t+1} \sim L(X_{t}, \cdot)$ and $u_{t} \sim \mathbb{U}[0,1]$. 
\STATE sample $x_{1}^{\ast},\ldots,x_{n}^{\ast}$  from $p(\cdot | \theta^{\ast})$, set $\psi(u_{t},X_{t},\ell_{t+1}) = \frac{1}{n} \log(u_{t} \frac{p(X_{t}) L(X_{t},\ell_{t+1})}{ p(\ell_{t+1}) L(\ell_{t+1},X_{t})})$, and set $\Lambda^{\ast}(X_{t},\ell_{t+1}) = \frac{1}{n} \sum_{i=1}^{n} \log(\frac{p(x_{i}^{\ast} | \ell_{t+1})}{p(x_{i}^{\ast} | X_{t})})$.
\IF {$\Lambda^{\ast}(X_{t},\ell_{t+1}) > \psi(u_{t},X_{t},\ell_{t+1})$}
\STATE  set $X_{t+1} = \ell_{t+1}$.
 \ELSE
\STATE
 set $X_{t+1} = X_{t}$.
\ENDIF
\ENDFOR
\end{algorithmic}
\end{algorithm}

Denote by $\KMHI{n}$ the kernel associated with Algorithm \ref{AlgGenMhInfinite}. Under modest assumptions, this kernel is the limit as $N$ goes to infinity of the Kernel $\KSW{1}{n}$ defined by Algorithm \ref{AlgGenMhWide}: 

\begin{lemma} [Limit of $\KSW{1}{n}$] \label{LemmaKernelLimits}
Let data points $x_{1},x_{2},\ldots$ be an i.i.d. sequence drawn from $p(x | \theta^{\ast})$. For $n \leq N \in \mathbb{N}$, let $\{ X_{t}\}_{t \geq 0}$ be a Markov chain driven from  the kernel $\KSW{1}{n}$ and associated with the ordered sequence of data points $\{ x_{i} \}_{i =1}^{N}$. Let $\{Y_{t}\}_{t \geq 0}$ be a Markov chain driven by the kernel  $\KMHI{n}$. Assume that $\{ X_{t} \}_{t \geq 0}$, $\{ Y_{t} \}_{t \geq 0}$ have the same starting point $X_{0} = Y_{0} = x \in \Omega$.  Then for fixed $T \in \mathbb{N}$ and any measurable $A \subset \Omega^{T+1}$,
\be 
\lim_{N \rightarrow \infty} \P[(X_{0},\ldots,X_{T}) \in A] = \P[(Y_{0},\ldots,Y_{T}) \in A].
\ee 
\end{lemma}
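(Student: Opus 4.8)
The plan is to reduce the whole statement to a one-step kernel limit and then propagate it through the $T$ (fixed) steps by an elementary induction. First I would reduce to the following stronger claim: for almost every realization of the i.i.d.\ sequence $x_{1},x_{2},\ldots$ and every fixed $z \in \Omega$,
\be \label{EqOneStepLimitKernel}
\lim_{N \to \infty} \big\| \KSW{1}{n}(z, \cdot) - \KMHI{n}(z, \cdot) \big\|_{\TV} = 0 .
\ee
Granting \eqref{EqOneStepLimitKernel}, conditioning on the data and running the induction below gives total-variation convergence of the conditional law of $(X_{0},\ldots,X_{T})$ to the law of $(Y_{0},\ldots,Y_{T})$ for a.e.\ data realization; this implies the set-wise convergence in the statement, and (reading $\P$ as the joint law of data and chain) bounded convergence upgrades it to the stated conclusion.

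To establish \eqref{EqOneStepLimitKernel}, I would fix $z$ and integrate out the auxiliary uniform $u$. In both algorithms one draws a proposal $\ell \sim L(z,\cdot)$ and, given $(z,\ell)$ and the subsample $S=(x_{1}^{\ast},\ldots,x_{n}^{\ast})$, moves to $\ell$ with probability exactly
\be \label{EqMoveProb}
\Phi_{z,\ell}(S) \;=\; \min\!\Big( \frac{p(\ell)\,L(\ell,z)}{p(z)\,L(z,\ell)}\,\prod_{k=1}^{n} \frac{p(x_{k}^{\ast}\mid\ell)}{p(x_{k}^{\ast}\mid z)},\; 1 \Big),
\ee
and stays at $z$ otherwise; so $\KSW{1}{n}(z,\cdot)$ and $\KMHI{n}(z,\cdot)$ are mixtures of precisely the same shape, differing only in the move-probabilities $a_{N}(z,\ell)=\E[\Phi_{z,\ell}(S^{(X)})]$ and $a_{\infty}(z,\ell)=\E[\Phi_{z,\ell}(S^{(Y)})]$, where $S^{(X)}$ is a size-$n$ sample drawn without replacement from $\{x_{1},\ldots,x_{N}\}$ and $S^{(Y)}$ is $n$ i.i.d.\ draws from $p(\cdot\mid\theta^{\ast})$. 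A one-line computation then gives $\|\KSW{1}{n}(z,\cdot)-\KMHI{n}(z,\cdot)\|_{\TV}\le\int L(z,d\ell)\,|a_{N}(z,\ell)-a_{\infty}(z,\ell)|$, and since the integrand is at most $1$, dominated convergence reduces \eqref{EqOneStepLimitKernel} to showing $a_{N}(z,\ell)\to a_{\infty}(z,\ell)$ for each fixed $\ell$.

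For that last step I would argue as follows. Under mild regularity of the model — it suffices that $x\mapsto p(x\mid\theta)$ be continuous and strictly positive on the support of $p(\cdot\mid\theta^{\ast})$ and that $p(\cdot\mid\theta^{\ast})$ be non-atomic — the map $S\mapsto\Phi_{z,\ell}(S)$ is bounded by $1$ and continuous, so it is enough to show the weak convergence $\mathcal{L}(S^{(X)})\Rightarrow\mathcal{L}(S^{(Y)})$. This is classical: with $\widehat P_{N}$ the empirical measure of $x_{1},\ldots,x_{N}$, the law of $S^{(X)}$ differs in total variation from $\widehat P_{N}^{\otimes n}$ (sampling with replacement) by at most $\binom{n}{2}/N\to 0$, while $\widehat P_{N}\Rightarrow p(\cdot\mid\theta^{\ast})$ almost surely (Varadarajan's theorem), whence $\widehat P_{N}^{\otimes n}\Rightarrow p(\cdot\mid\theta^{\ast})^{\otimes n}=\mathcal{L}(S^{(Y)})$ almost surely; crucially this holds on a single almost-sure event not depending on $(z,\ell)$, giving $a_{N}(z,\ell)\to a_{\infty}(z,\ell)$ for all $(z,\ell)$ and hence \eqref{EqOneStepLimitKernel}. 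Finally I would run the induction: writing $\mu_{N}^{(t)}$ and $\nu^{(t)}$ for the laws of $(X_{0},\ldots,X_{t})$ and $(Y_{0},\ldots,Y_{t})$, the fact that appending a transition kernel cannot increase total-variation distance, together with $\nu^{(t)}$ not depending on $N$, yields $\|\mu_{N}^{(t+1)}-\nu^{(t+1)}\|_{\TV}\le\|\mu_{N}^{(t)}-\nu^{(t)}\|_{\TV}+\int\|\KSW{1}{n}(z,\cdot)-\KMHI{n}(z,\cdot)\|_{\TV}\,\nu^{(t)}(dz)$, which tends to $0$ by the inductive hypothesis, \eqref{EqOneStepLimitKernel}, and dominated convergence; the base case $t=0$ is trivial, and $t=T$ is the assertion.

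The hard part, to my mind, is the reduction to weak convergence of the subsamples. Since $\widehat P_{N}$ and $p(\cdot\mid\theta^{\ast})$ are mutually singular, the two subsamples cannot be coupled to be equal, so there is no direct path-coupling; the way around this is that the object one actually needs to control is the \emph{move-probability} $\Phi_{z,\ell}$, which — precisely because the Metropolis decision is randomized through the uniform $u$ — is a bounded \emph{continuous} functional of the subsample rather than an indicator. This is exactly what lets plain weak convergence of $\mathcal{L}(S^{(X)})$ suffice and, moreover, produces a \emph{total-variation} one-step limit (rather than a merely weak one), which is what makes the induction over the $T$ steps go through without friction. The only other thing requiring care is identifying the minimal regularity on $(p,L)$ under which $\Phi_{z,\ell}$ is continuous and the expectations above are finite; this is what ``modest assumptions'' refers to.
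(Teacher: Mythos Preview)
Your argument is correct under the regularity assumptions you impose, but the paper takes a genuinely different and considerably simpler route. You condition on the data, prove a one-step kernel total-variation limit via weak convergence of the subsample law and continuity of the move-probability functional, and then induct over $t$. The paper instead couples at the level of the \emph{data sequence} itself: since $x_{1},x_{2},\ldots$ are i.i.d.\ from $p(\cdot\mid\theta^{\ast})$ and the $Y$-chain's fresh draws are also i.i.d.\ from $p(\cdot\mid\theta^{\ast})$, one can set the first $n(T+1)$ data points equal to the $n(T+1)$ fresh draws used by the $Y$-chain (after a relabeling). On the event that the $X$-chain's without-replacement subsamples never revisit an index --- which fails with probability at most $n^{2}(T+1)^{2}/N$ --- the two chains use \emph{identical} subsamples at every step and hence produce identical paths. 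This gives a one-line quantitative bound $|\P[(X_{0},\ldots,X_{T})\in A]-\P[(Y_{0},\ldots,Y_{T})\in A]|\le n^{2}(T+1)^{2}/N$ with essentially no assumptions on the model, whereas your argument needs continuity and positivity of the likelihood to make $\Phi_{z,\ell}$ a bounded continuous functional and yields no rate.

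Your closing remark that ``the two subsamples cannot be coupled to be equal, so there is no direct path-coupling'' is therefore not quite right: it is true only if one first freezes the data and then tries to couple. The paper's trick is precisely to refuse to do that --- the data are themselves random, and the joint law $\P$ in the statement includes this randomness, so one may (and the paper does) couple the data sequence to the $Y$-chain's fresh draws. That is what makes an exact path-coupling available.
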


Lemma \ref{LemmaKernelLimits} implies that $\KSW{1}{n}$ is a small perturbation of $\KMHI{n}$ for all $N$ sufficiently large, while Lemma \ref{LemmaSmallPerturbation}  allowed us to conclude that $\KSW{A}{n}$ is a small perturbation of  $\KMHW{n}$ only for $A \gg n$. We point out that this means Lemma \ref{LemmaKernelLimits} can be used with Theorem \ref{ThmTradeoff} to obtain better convergence rates for approximate MCMC algorithms in the large-$N$ limit.

Recall that the purpose of  Lemma \ref{LemmaSmallPerturbation} was to obtain a bound of the form \eqref{IneqPerturbationAssumptionTradeoff}, to be used in an application of Theorem \ref{ThmTradeoff}. When it applies, Lemma \ref{LemmaKernelLimits} gives a bound of the form \eqref{IneqPerturbationAssumptionTradeoff} for kernel $\tilde{K}^{(n)} = \KSW{1}{n}$, rather than the more computationally expensive kernel $\KSW{A}{n}$ studied in Lemma \ref{LemmaSmallPerturbation}. This means that, in the large-$N$ limit under which Lemma \ref{LemmaKernelLimits} applies, it can be used to give a bound of the form \eqref{IneqPerturbationAssumptionTradeoff} for a computationally cheaper kernel, which in turn gives a better final convergence rate for Theorem \ref{ThmTradeoff}. 

\begin{remark} [Open Question]
We have proposed two families of `interpolating' chains, $\KSW{1}{n}$ and $\KMHI{n}$, to be used in Theorem \ref{ThmTradeoff}.  Can we construct a sequence of interpolating chains that gives better convergence rates, or are useful in a wider set of circumstances?
\end{remark}

In addition to these theoretical considerations, Lemma \ref{LemmaKernelLimits} raises some practical questions. The convergence rates given in Theorem \ref{ThmTradeoff} are, at best, the same as would be obtained by simply picking a fixed subsample of size $n \approx \sqrt{M}$ and running the usual Metropolis-Hastings algorithm for that subsample. We might ask: in the `big-data' limit that $N \rightarrow \infty$ and $M$ is large but fixed, why not simply pick a single subsample of the data and run the usual Metropolis-Hastings algorithm for that subsample? The fact that changing the subsample at every step is popular suggests that many practitioners believe it is a good idea, but to our knowledge there have been no attempts to justify it.

The following example suggests that the answer to this question is not completely straightforward:

\begin{example} [Does Resampling Help?]

Fix $n \in \mathbb{N}$. Let $p(\theta) = \mathcal{N}(0,\sigma^{2})$, let $p(x | \theta) = \mathcal{N}(x,1)$ and let $L(x,\cdot) \mathcal{N}(x, \frac{1}{n})$. Let $x_{1},\ldots,x_{n}$ be an i.i.d. sample from $\mathcal{N}(0,1)$, and let $K_{\mathrm{ss}}$ be the (random) kernel associated with Algorithm \ref{AlgGenMH} and these parameters and data. Let  $\KMHI{n}$ be the kernel associated with Algorithm \ref{AlgGenMhInfinite} and these parameters and data. Let $\pi_{\mathrm{ss}}$ and $\PMHI{n}$ be their stationary distributions. Define $f(x) = x^{2}$. It is easy to calculate $\pi_{\mathrm{ss}}$:
\be 
\pi_{\mathrm{ss}} = \mathcal{N}( (\sigma^{-2} + n)^{-1}( \sum_{i=1}^{n} x_{i}), (\sigma^{-2} + n)^{-1}).
\ee 
Integrating over $x_{1},\ldots,x_{n} \sim \mathcal{N}(0,1)$ gives
\be 
\E[\pi_{\mathrm{ss}}(f)] = n (\sigma^{-2} + n)^{-2} +  (\sigma^{-2} + n)^{-2} =  \frac{n+1} {(\sigma^{-2} + n)^{-2}}.
\ee 
It is harder to compute $\PMHI{n}$ directly. By simulation, we found that for $\sigma = 1$,
\be 
n \PMHI{n} (f) \geq 1.4
\ee 
for $n$ large, while $n \E[\pi_{\mathrm{ss}}(f)] \approx 1$. This suggests that, at least for this example, repeated resampling hurts performance. 
\end{example}

\begin{remark} [Open Question]
In what sense does repeat resampling help MCMC performance, relative to picking a single subsample at the start of the run?
\end{remark}

\subsection{Sharpness of Perturbation Bounds} \label{SecMisc}

We point out that the bounds in Section \ref{SecConvBounds} are essentially sharp, but can be misleading for statistical examples. To emphasize our interest in our perturbation bounds rather than the details of specific chains, both of our examples deal only with Markov chains that give i.i.d. samples from their target distributions. In both cases, we can calculate the approximation error exactly, rather than using approximations.

\begin{example} [Improved Convergence Rates]
Consider the target $\pi(\theta \vert \{x_{i} \}_{i = 1}^{N}) = \mathcal{N} \left( \frac{\sum_{i = 1}^{N} x_{i}}{N}, \frac{1}{N} \right)$ with approximation $\pi_{S}(\theta) = \mathcal{N} \left( \frac{\sum_{x \in S} x}{\vert S \vert}, \frac{1}{N} \right)$ for any subset (possibly with repetition) $S$ of $\{ x_{i} \}_{i=1}^{N}$. 

For any given computational budget $M$ and subsample size $n \leq M,N$, we drawn $\lfloor \frac{M}{n} \rfloor$ Monte Carlo samples to approximate $\theta$ as follows. Choose $n$ points $S = \{ x_{1}^{\ast},\ldots,x_{n}^{\ast} \}$ uniformly at random with replacement from $\{ x_{i} \}_{i=1}^{N}$. For $1 \leq t \leq \frac{M}{n}$, then draw $\theta_{t} \sim \pi_{S}(\theta)$. By the usual decomposition of variance formula, 
\be 
\var\Big( \frac{n}{M} \sum_{t=0}^{\frac{M}{n}} \theta_{t} \Big) &= \E \Big( \var \big( \frac{n}{M} \sum_{t=0}^{\frac{M}{n}} \theta_{t} \Big \vert S \big) \Big) + \var \Big( \big( \frac{n}{M} \sum_{t=0}^{\frac{M}{n}} \theta_{t} \Big \vert S_{t} \big) \Big) \\
&= \frac{n}{MN} + \frac{1}{n}.
\ee 
In this setting, choosing $n = \min(M, N, \sqrt{MN})$ is optimal, giving the usual $O \left( \frac{1}{\sqrt{M}} \right)$ convergence rate as the computational resources $M$ becomes large much more slowly than the amount of data $N$. Although the details change, similar conclusions hold if the set $S$ is resampled at each time $t$, and also if sampling is done without replacement. 
\end{example}

\begin{example} [Sharpness of Perturbation Bounds]
To find simple examples for which the $M^{-\frac{1}{4}}$ rate of convergence for estimates is correct, fix again a computational budget $M$. Also fix a constant $0 \leq n \leq M,N$ that will stand in for the subsample size. For now, we assume that $\frac{M}{n}$ is an integer, as this does not affect our conclusions. We define the measure $\mu_{n} = \left( 1 - \frac{1}{\sqrt{n}} \right) \mathrm{U}[0,1] + \frac{1}{\sqrt{n}} \delta_{0}$, and view $\mu_{n}$ as an approximation of $\mu_{\infty} = \mathrm{Unif}[0,1]$. We then consider a sequence of i.i.d. samples $\theta_{1}, \ldots, \theta_{ \frac{M}{n}}$ from $\mu_{n}$. We can exactly calculate the mean-square error obtained by using the Monte Carlo samples $\{ \theta_{t} \}_{t=1}^{\frac{M}{n}}$ from $\mu_{n}$ to estimate the mean of $\mu_{\infty}$:
\be 
\E \Big( \big( \frac{n}{M} \sum_{t=1}^{\frac{M}{n}} \theta_{t} - \frac{1}{2} \big)^{2} \Big) = \frac{1}{4n} + \frac{n}{3M}.
\ee 
The optimal choice is $n \approx \sqrt{\frac{3}{4}M}$, giving a decay rate of $O \left( M^{-\frac{1}{4}} \right)$. We can see here that the cancellation that occurs in the previous example does not occur here. 
\end{example}

This pair of examples shows that perturbation-theoretic bounds of the form given in Section \ref{SecConvBounds} cannot give a convergence rate that is much better than $M^{-\frac{1}{4}}$, but that certain (very simple) statistical examples seem to have much faster convergence rates.

\section*{Acknowledgements}
NSP is partially supported by NSF and ONR grants. AMS is partially supported by NSERC grants. We thank Christian Robert for comments on a previous version of this paper.

\section*{Appendix A: Proofs of Technical Results}

\subsection{Proofs for Section \ref{SecConvBounds}}

\begin{proof}[Proof of Lemma \ref{LemmaWassCont}] 
 By the triangle inequality and inequalities \eqref{IneqSimpleWassContraction}, \eqref{IneqKernCloseWassCont}, 
\be \label{IneqWassInitialTriangle}
W_{d}(K(x,\cdot), \tilde{K}(y,\cdot)) &\leq W_{d}(K(x,\cdot), K(y,\cdot)) + W_{d}(K(y,\cdot), \tilde{K}(y,\cdot)) \\
&\leq (1-\alpha) d(x,y) + \delta 
\ee 
for any $x,y \in \mathcal{X}$. Next, fix $\gamma > 0$ and assume that $\{ X_{t} \}_{t \geq 0}$ is started according to the stationary distribution (that is, $X_{0} \sim \pi$). By the definition of the Wasserstein distance, for any $0 \leq s < T$ it is possible to couple $X_{s+1}, \tilde{X}_{s+1}$ given $X_{s}, \tilde{X}_{s}$ so that 
\be 
\E[d(X_{s+1}, \tilde{X}_{s+1}) \vert X_{s}, \tilde{X}_{s}] \leq \gamma + W_{d}(K(X_{s},\cdot), \tilde{K}(\tilde{X}_{s},\cdot)).
\ee 
Combining this bound with inequality \eqref{IneqWassInitialTriangle} and \eqref{IneqThreePart}, it is possible to couple $X_{s+1}, \tilde{X}_{s+1}$ given $X_{s}, \tilde{X}_{s}$ so that 
\be \label{MainWassIneqCoupInX}
\E[d(X_{s+1}, \tilde{X}_{s+1}) \vert X_{s}, \tilde{X}_{s}] \leq \gamma + (1 - \alpha) d(X_{s},\tilde{X}_{s}) + \delta
\ee 
if $X_{s}, \tilde{X}_{s} \in \mathcal{X}$ and so that
\be \label{MainWassIneqCoupOutX}
\E[L(X_{s+1}) | X_{s}] &\leq (1 - \beta) L(X_{s}) + \mathcal{B} \\
\E[L(\tilde{X}_{s+1}) | \tilde{X}_{s}] &\leq (1 - \beta) L(\tilde{X}_{s}) + \mathcal{B}\\
\ee
otherwise. Define $\tau = \sup \{ t \geq 0 \, : \, X_{t}, \tilde{X}_{t} \in \mathcal{X} \}$. By the definition of the function $L$,
\be \label{IneqWassMainSimpleMainBound}
W_{d}(\mathcal{L}(\tilde{X}_{T}), \pi) &= \E[d(X_{T}, \tilde{X}_{T}) \textbf{1}_{\tau \geq T}] +  \E[d(X_{T}, \tilde{X}_{T}) (1 - \textbf{1}_{\tau \geq T})] \\
&\leq \E[d(X_{T}, \tilde{X}_{T}) \textbf{1}_{\tau \geq T}] +  \E[(L(X_{T}) + L(\tilde{X}_{T}) + 2 c_{p})(1 - \textbf{1}_{\tau \geq T})]
\ee 
To deal with the first term of inequality \eqref{IneqWassMainSimpleMainBound}, inequality \eqref{MainWassIneqCoupInX} implies
\be \label{IneqWassMainSimpleIterate} 
\E[d(X_{T}, \tilde{X}_{T}) \textbf{1}_{\tau \geq T}] &\leq \E[\delta + \gamma + (1-\alpha)\E[d(X_{T-1}, \tilde{X}_{T-1})\textbf{1}_{\tau \geq T}]] \\
&\leq \ldots  \\
&\leq \frac{\delta + \gamma}{\alpha} + (1-\alpha)^{T} E(x).
\ee 
To deal with the second term of inequality \eqref{IneqWassMainSimpleMainBound}, we have by inequality \eqref{IneqThreePart} (and eliding repeated use of the computation in inequality \eqref{IneqWassMainSimpleIterate}): 
\be \label{IneqWassMainSimpleIterate2} 
\E[(L(X_{T}) &+ L(\tilde{X}_{T}) + 2 c_{p}) (1 - \textbf{1}_{\tau \geq T})] \\
&\leq \P[1 \leq \tau \leq T-1] \sup_{1 \leq t \leq T-1} \E[(L(X_{T}) + L(\tilde{X}_{T}) + 2 c_{p}) \vert \tau = t]  + \E[(L(X_{T}) + L(\tilde{X}_{T}) + 2 c_{p}) \textbf{1}_{X_{0} \notin \mathcal{X}}] \\
&\leq 2 \P[1 \leq \tau < T-1] \sup_{1 \leq t \leq T} \left(  \mathcal{C} (1 - \beta)^{T-t} + \frac{\mathcal{B}}{\beta}  + c_{p} \right) \\
&+  \left( 2 \frac{\mathcal{B}}{\beta} + (1 - \beta)^{T} (L(x) + \mathcal{D}) + 2 c_{p} \right) \P[X_{0} \notin \mathcal{X}] \\
&=2 \P[1 \leq \tau < T-1] \left(  \mathcal{C}  + \frac{\mathcal{B} }{\beta} + c_{p} \right) \\
&+  \left( 2 \frac{\mathcal{B} }{\beta} + (1 - \beta)^{T} (L(x) + \mathcal{D}) + 2 c_{p} \right) \P[X_{0} \notin \mathcal{X}] \\
\ee 
Applying bounds \eqref{IneqWassMainSimpleIterate}, \eqref{IneqWassMainSimpleIterate2} to inequality \eqref{IneqWassMainSimpleMainBound} gives 
\be 
W_{d}(\mathcal{L}(\tilde{X}_{T}), \pi) &\leq \frac{\delta + \gamma}{\alpha} + (1-\alpha)^{T} E(x) + 2 \left( 1 - \P[\{ X_{t} \}_{t=1}^{T-1} \cup \{ \tilde{X}_{t} \}_{t =1}^{T-1} \subset \mathcal{X}] \right) \left(  \mathcal{C}  + \frac{\mathcal{B} }{\beta} + c_{p} \right) \\
&+  \left( 2 \frac{\mathcal{B} }{\beta} + (1 - \beta)^{T} (L(x) + \mathcal{D}) + 2 c_{p} \right) \pi(\mathcal{X}^{c}).
\ee 
Letting $\gamma$ go to 0 completes the proof of inequality \eqref{IneqAbsCoupResTwo}. 
\end{proof}

\begin{proof} [Proof of Corollary \ref{CorWassCont}]
Inequality \eqref{IneqAbsCoupResTwoNice} follows immediately from inequality \eqref{IneqAbsCoupResTwo}. Inequality follows from \eqref{IneqAbsCoupResTwoNice} by noting that 
\be 
W_{d}(\tilde{\pi}, \pi) = \lim_{T \rightarrow \infty} W_{d}(\mathcal{L}(\tilde{X}_{T}), \pi ) \leq \lim_{T \rightarrow \infty} ( \frac{\delta}{\alpha} + (1-\alpha)^{T} E(x)) = \frac{\delta}{\alpha}.
\ee 
\end{proof}

\begin{proof} [Proof of Corollary \ref{LemCoupIneqAdapt2}]
Applying the triangle inequality to inequality \eqref{IneqTvContAssumption} $\tmix$ times, we have 
\be 
\sup_{x \in \Omega} \| \tilde{K}^{\tmix}(x,\cdot) - K^{\tmix}(x,\cdot) \|_{\TV} < \tmix \, \delta.
\ee 
Thus, applying  Corollary \ref{CorWassCont} to the thinned chains $\{ X_{k \, \tmix} \}_{k \geq 0}$, $\{ \tilde{X}_{k \, \tmix} \}_{k \geq 0}$ yields 
\be \label{IneqAbsCoupResThreeInterm} 
\| \mathcal{L}( \tilde{X}_{k \, \tmix}) -  \pi \|_{\TV} \leq \frac{4 \delta}{3} \tmix + 4^{-k},
\ee 
which immediately implies inequality \eqref{IneqAbsCoupResThree}. To prove inequality \eqref{IneqAbsCoupResThreeStat} from  inequality \eqref{IneqAbsCoupResThree}, note 

\be 
\| \tilde{\pi} - \pi \| = \lim_{T \rightarrow \infty} \| \mathcal{L}(\tilde{X}_{T}) - \pi \| \leq \lim_{T \rightarrow \infty} ( \frac{4 \delta}{3} \tmix  + 4^{-\lfloor \frac{T}{\tmix} \rfloor}) = \frac{4 \delta}{3} \tmix.
\ee 

Finally, we prove inequality \eqref{IneqAbsCoupResThreeMix}. By inequalities \eqref{IneqAbsCoupResThreeInterm} and \eqref{IneqAbsCoupResThreeStat}, we have 
\be 
\| \mathcal{L}( \tilde{X}_{2 \tmix}) - \tilde{\pi} \|_{\TV} &\leq \| \mathcal{L}( \tilde{X}_{2 \tmix}) - \pi \|_{\TV} + \| \pi - \tilde{\pi} \|_{\TV} \\
&\leq \frac{4 \delta}{3} \tmix + 4^{-k} +  \frac{4 \delta}{3} \tmix \\ 
& < \frac{1}{4}.
\ee 
\end{proof}

We need the following bound for the proof of Theorem \ref{BiasGeomErg}:

\begin{lemma}[Drift Implies Concentration] \label{LemmaDriftConc}
Let $K$ be the transition kernel of a Markov chain satisfying inequality \eqref{EqLyapunovDef} and define $\mathcal{M}(C) = \{ x \in \Omega \, : \, V(x) \leq C \}$. Then
\be 
\pi(\mathcal{M}(C)) \geq  1 - \frac{b}{aC}.
\ee 

\end{lemma}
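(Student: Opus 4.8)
The plan is to use the drift condition directly to bound the $\pi$-measure of the sublevel set $\mathcal{M}(C)$. Since $\pi$ is stationary for $K$, integrating the drift inequality \eqref{EqLyapunovDef} against $\pi$ (with $\ell = 1$; the case of general $\ell$ is identical after replacing $K$ by $K^{\ell}$) gives
\be
\int_{\Omega} V(x) \, \pi(dx) = \int_{\Omega} \E[V(X_{t+1}) \mid X_{t} = x] \, \pi(dx) \leq (1-a) \int_{\Omega} V(x) \, \pi(dx) + b,
\ee
so that $\pi(V) \leq \frac{b}{a}$, provided we first argue that $\pi(V) < \infty$. The finiteness of $\pi(V)$ is the one genuinely technical point: in full generality one truncates $V$ at level $C$, applies the above to $\min(V, C)$, and lets $C \to \infty$ by monotone convergence; alternatively one simply assumes $\pi(V) < \infty$, which holds in all cases of interest. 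I expect this to be the only real obstacle, and it is a mild one.

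Once $\pi(V) \leq \frac{b}{a}$ is established, the conclusion follows from Markov's inequality applied to the nonnegative function $V$ under $\pi$:
\be
\pi(\mathcal{M}(C)^{c}) = \pi\big( \{ x : V(x) > C \} \big) \leq \frac{\pi(V)}{C} \leq \frac{b}{aC},
\ee
and hence $\pi(\mathcal{M}(C)) = 1 - \pi(\mathcal{M}(C)^{c}) \geq 1 - \frac{b}{aC}$, which is the claimed bound.

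In summary: first establish $\pi(V) \leq b/a$ by integrating the Lyapunov inequality against the stationary measure (taking care of integrability by a truncation argument), then apply Markov's inequality. No coupling, curvature, or perturbation machinery is needed here — this lemma is purely a consequence of stationarity plus the drift condition, and it will feed into the proof of Theorem \ref{BiasGeomErg} by controlling $\pi(\mathcal{X}^{c})$ and $\tilde{\pi}(\mathcal{X}^{c})$ in terms of the constants $a, b, \tilde a, \tilde b$ and the threshold $C$.
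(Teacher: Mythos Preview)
Your proposal is correct and follows essentially the same approach as the paper: integrate the drift inequality against the stationary measure to obtain $\pi(V) \leq b/a$, then apply Markov's inequality. You are in fact slightly more careful than the paper, which does not explicitly address the finiteness of $\pi(V)$.
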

\begin{proof}
Let $\{ X_{t} \}_{t geq 0}$ be a Markov chain with transition kernel $K$, started at stationarity, \textit{i.e.}, $X_{0} \sim \pi$. Since $X_{\ell}$ then has distribution $\pi$ as well,
\be 
\pi(V) &= \E[\E[V(X_{\ell}) | X_{0}] ] \\
&\leq \E[(1-a)V(X_{0}) + b] \\
&= (1-a) \pi(V) + b.
\ee 
Thus, $\pi(V) \leq \frac{b}{a}$ and so by Markov's inequality,
\be 
\pi(\mathcal{M}(C)) \geq 1 - \frac{b}{aC}.
\ee 
\end{proof}

We then apply this:

\begin{proof} [Proof of Theorem \ref{BiasGeomErg}]
Let $\pi_{\mathcal{X}}$ and $\tilde{\pi}_{\mathcal{X}}$ be the stationary distributions of the traces of $\{X_{t}\}_{t \geq 0}$ and $\{ \tilde{X}_{t} \}_{t \geq 0}$ on $\mathcal{X}$; they are given by $\pi_{\mathcal{X}}(A) = \frac{\pi(A \cap \mathcal{X})}{\pi(\mathcal{X})}$ and  $\tilde{\pi}_{\mathcal{X}}(A) = \frac{\tilde{\pi}(A \cap \mathcal{X})}{\tilde{\pi}(\mathcal{X})}$. We have 
\be \label{BiasGeomErgMain}
\| \pi - \tilde{\pi} \|_{\TV} &= \sup_{A \subset \Omega} | \pi(A) - \tilde{\pi}(A) | \\
&= \sup_{A \subset \Omega} | \pi(A \cap \mathcal{X}) + \pi(A \cap \mathcal{X}^{c}) - \tilde{\pi}(A\cap \mathcal{X}) - \tilde{\pi}(A\cap \mathcal{X}^{c})| \\ 
&\leq \sup_{A \subset \Omega}( | \pi(A \cap \mathcal{X}) - \tilde{\pi}(A\cap \mathcal{X})| + \pi(A \cap \mathcal{X}^{c})  + \tilde{\pi}(A\cap \mathcal{X}^{c})) \\
&\leq  \sup_{A \subset \Omega} | \pi_{\mathcal{X}}(A \cap \mathcal{X}) \pi(\mathcal{X}) - \tilde{\pi}_{\mathcal{X}}(A\cap \mathcal{X}) \tilde{\pi}(\mathcal{X}) | + \pi(\mathcal{X}^{c})  + \tilde{\pi}(\mathcal{X}^{c}) \\
&\leq \sup_{A \subset \Omega} | \pi_{\mathcal{X}}(A \cap \mathcal{X}) - \tilde{\pi}_{\mathcal{X}}(A\cap \mathcal{X})  | + 2 \pi(\mathcal{X}^{c})  + 2 \tilde{\pi}(\mathcal{X}^{c}) \\
&= \| \pi_{\mathcal{X}} - \tilde{\pi}_{\mathcal{X}} \|_{\TV} + 2 \pi(\mathcal{X}^{c})  + 2 \tilde{\pi}(\mathcal{X}^{c}). 
\ee 

We now bound these three terms. By Corollary \ref{LemCoupIneqAdapt2}, 
\be \label{BiasGeomErgSmall}
\| \pi_{\mathcal{X}} - \tilde{\pi}_{\mathcal{X}} \|_{\TV} \leq \frac{4 \delta}{3} \tmix.
\ee 
By Lemma \ref{LemmaDriftConc},
\be \label{BiasGeomErgBig}
 \pi(\mathcal{X}^{c}) \leq \frac{b}{a C}, \qquad  \tilde{\pi}(\mathcal{X}^{c}) \leq \frac{\tilde{b}}{\tilde{a} C}.
\ee 
Applying bounds \eqref{BiasGeomErgSmall} and \eqref{BiasGeomErgBig} to inequality \eqref{BiasGeomErgMain} completes the proof. 
\end{proof}

\begin{proof} [Proof of Lemma \ref{LemmDriftCondAustFrame}]
Let $\{ X_{t} \}_{t \in \mathbb{N}}$ be a chain evolving according to $\tilde{K}$. Then
\be 
\E[V(X_{t+1}) \vert X_{t} = x] &= \int_{z} \left( \tilde{\alpha}(x,z) V(z) + (1 - \tilde{\alpha}(x,z)) V(x) \right) L(x,dz) \\
&\leq  \int_{z} \left( \alpha(x,z) V(z) + (1 - \alpha(x,z)) V(x) \right) L(x,dz) \\
&+ \delta \int_{z} \left(f(x,z) V(z) + f(x,z) V(x) \right) L(x,dz) \\
&\leq (1 - a)V(x) + b + \delta V(x) + \delta V(x) \\
&= (1 - a + 2 \delta)V(x) + b.
\ee 
\end{proof}

\subsection{Proofs from Section \ref{SecAusterity}}

\begin{proof} [Proof of Lemma \ref{LemmaBurninBound}]
Let $\tau_{\mathrm{coup}} = \inf \{ t \geq 0 \, : \, X_{t} = Y_{t} \}$ and let $T = \lfloor \frac{M}{c(K)}  \rfloor$. By the construction of the chains $\{X_{t} \}_{t \geq 0}$, $\{Y_{t} \}_{t \geq 0}$, we have 
\be 
\mathcal{E}_{M}^{(1)}(K,f)  &\leq \sup_{g \, : \, \Omega \mapsto [0,1]} \sup_{x \in \Omega}  \E_{x}[ (\frac{1}{T+1} \sum_{t=0}^{T} (g(X_{t}) - g(Y_{t})))^{2}] \\
&\leq  \sup_{x \in \Omega}  \E_{x}[ (\frac{1}{T+1} \sum_{t=0}^{T} \textbf{1}_{X_{t} \neq Y_{t}})^{2} ]  \\
&= \sup_{x \in \Omega} \E_{x} [(\frac{\tau_{\mathrm{coup}}}{T+1})^{2}] \\
&\leq \frac{1}{(T+1)^{2}} \sum_{k=0}^{\lceil \frac{T+1}{\tmix} \rceil} \tmix^{2} \sup_{x \in \Omega} \tmix \P[\tau_{\mathrm{coup}} > k \tmix] \\
&\leq \frac{1}{(T+1)^{2}} \sum_{k=0}^{\infty} \tmix^{2} (\sup_{x \in \Omega} \P[\tau_{\mathrm{coup}} > \tmix])^{k} \leq  \left( \frac{\tmix}{T+1} \right)^{2} \sum_{k=0}^{\infty} 2^{-k} \leq  2  \left( \frac{\tmix}{T+1} \right)^{2}.
\ee 
\end{proof}

\begin{proof} [Proof of Theorem \ref{ThmTradeoff}]
Set $n_{0} = \min \{ n \, : \, C''' \, n^{-c_{2}} \leq \frac{9}{128 C' \tmix} \}$ and fix $n_{0} \leq n \leq N$. We calculate: 

\be 
\mathcal{E}_{M}(\tilde{K}^{(n)}, f) &= \mathcal{E}_{M}^{(1)}(\tilde{K}^{(n)}, f) + \mathcal{E}_{M}^{(2)}(\tilde{K}^{(n)}, f) + \mathcal{E}_{M}^{(3)}(\tilde{K}^{(n)}, f) \\
&\leq 2 (\ttmix^{(n)})^{2} \big\lfloor \frac{M}{n^{1+c_{1}}} \big\rfloor^{-2} + 2 \ttmix^{(n)} \mathrm{Var}_{\pi^{(n)}}(f)  \big\lfloor \frac{M}{n^{1+c_{1}}} \big\rfloor^{-1} \\
&+ \frac{16}{9} (\ttmix^{(n)})^{2} \sup_{x \in \Omega} \| \tilde{K}^{(n)}( x,\cdot) - K^{(n)}(x,\cdot) \|_{\TV}^{2} + | \pi(f) - \pi^{(n)}(f) | \\
&\leq 2 (C' \tmix)^{2} \big\lfloor \frac{M}{n^{1+c_{1}}} \big\rfloor^{-2} + 2 C' \tmix \mathrm{Var}_{\tilde{\pi}^{(n)}}(f)  \big\lfloor \frac{M}{n^{1+c_{1}}} \big\rfloor^{-1} \\
&+ \frac{16}{9} (C' \tmix)^{2}  n^{-2 c_{2}} + C''' n^{-c_{2}} \\
&\leq 2 (C' \tmix)^{2} \big\lfloor \frac{M}{n^{1+c_{1}}} \big\rfloor^{-2} + 2 C' C'' \frac{\tmix}{n} \big\lfloor \frac{M}{n^{1+c_{1}}} \big\rfloor^{-1} + \frac{16}{9} (C' \tmix)^{2}  n^{-2 c_{2}} + C''' n^{-c_{2}},
\ee 
where the first inequality comes from Lemmas \ref{LemmaBurninBound}, \ref{LemmaVarianceBound} and \ref{LemmaBiasBound}, the second inequality comes from Corollary \ref{LemCoupIneqAdapt2} and assumptions \eqref{IneqPerturbationAssumptionTradeoff}, \eqref{IneqScalingAssumptionTradeoff1} and \eqref{IneqBiasAssumptionTradeoff},  and the last inequality comes from assumptions \eqref{IneqScalingAssumptionTradeoff1} and \eqref{IneqScalingAssumptionTradeoff2}.

Choosing $n = \lceil M^{\frac{1}{2+c_{1}}} \lceil$ to approximately optimize this bound gives inequality \eqref{IneqTradeoffMain}.

\end{proof}

We need the following in the proof of Lemma \ref{LemmaSmallPerturbation}:

\begin{lemma} \label{LemmaNonConcLogs}
Let $U \sim \mathrm{Unif}[0,1]$. Then for any $-\infty < a < b \leq 1$ with $|a -b | \leq 1$,
\be 
\P[\log(U) \in [a,b]] \leq \frac{3 e}{2} (b-a).
\ee 
\end{lemma}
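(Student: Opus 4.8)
The plan is to reduce the inequality to an elementary estimate on the exponential function, after discarding the part of $[a,b]$ that carries no mass. Since $U \le 1$ almost surely, $\log(U) \le 0$ almost surely, and $\log(U) = 0$ with probability $0$; hence $\P[\log(U) \in [a,b]] = 0$ whenever $a \ge 0$, in which case the claim is trivial because its right-hand side is strictly positive. So I would assume $a < 0$ and put $b' := \min(b,0) \in (a,0]$, observing that $\P[\log(U) \in [a,b]] = \P[\log(U) \in [a,b']]$, since $\log(U)$ assigns no mass to $(b',b]$.

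Next I would compute the probability exactly: applying the increasing map $t \mapsto e^{t}$,
\[
\P[\log(U) \in [a,b']] = \P[e^{a} \le U \le e^{b'}] = e^{b'} - e^{a},
\]
using $0 < e^{a} \le e^{b'} \le 1$ and that $U$ is uniform. It then remains to bound $e^{b'} - e^{a}$ by a multiple of the interval length. Since the derivative of $\exp$ is at most $1$ throughout $(-\infty,0]$, we have $e^{b'} - e^{a} = \int_{a}^{b'} e^{t}\,dt \le b' - a \le b - a$. Combining with the case $a \ge 0$ gives $\P[\log(U) \in [a,b]] \le b - a$, which is stronger than (and hence implies) the stated bound since $\tfrac{3e}{2} > 1$.

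I do not expect a genuine obstacle here: the one point deserving a moment of attention is the truncation at $0$ — the interval $[a,b]$ may straddle $0$, and $\log(U)$ puts all its mass on $(-\infty,0]$ — which is handled by replacing $b$ with $b\wedge 0$ as above. I would note that the stated constant $\tfrac{3e}{2}$ is far from sharp, and that the argument above does not in fact use either hypothesis $b \le 1$ or $|a-b| \le 1$. If one prefers a proof that does use $b \le 1$ and avoids any case split, one may simply bound the density of $\log(U)$ pointwise by $e^{t}$ and write $\P[\log(U) \in [a,b]] \le \int_{a}^{b} e^{t}\,dt = e^{b} - e^{a} \le (b-a)\,e^{b} \le (b-a)\,e$, the last step using $b \le 1$; this also yields the lemma, with constant $e \le \tfrac{3e}{2}$. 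Either route is short, and I would present the first.
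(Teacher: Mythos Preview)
Your proof is correct. It differs from the paper's in a way worth noting. The paper writes
\[
\P[\log(U)\in[a,b]] = e^{b}-e^{a} = e^{b}\bigl(1-e^{a-b}\bigr) \le e^{b}\Bigl((b-a)+\tfrac{1}{2}(b-a)^{2}\Bigr) \le \tfrac{3e}{2}(b-a),
\]
invoking $b\le 1$ for the bound $e^{b}\le e$ and $|a-b|\le 1$ to control the quadratic term. Your primary argument instead truncates $b$ at $0$ and uses that $\exp$ has derivative at most $1$ on $(-\infty,0]$, which gives the sharper bound $\P[\log(U)\in[a,b]]\le b-a$ with no hypotheses needed. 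Your alternative route---bounding the density by $e^{t}$ and integrating over $[a,b]$ to get $e^{b}-e^{a}\le (b-a)e^{b}\le (b-a)e$---is essentially the paper's computation done more efficiently, since the inequality $1-e^{-x}\le x$ already holds without the second-order term. Either of your arguments is cleaner; the paper's carries an unnecessary $\tfrac{3}{2}$ and uses both extra hypotheses where none are required.
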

\begin{proof}
We have 
\be 
\P[\log(U) \in [a,b]] &= \P[U \in [e^{a}, e^{b}]] \\
&= (e^{b} - e^{a}) = e^{b} (1 - e^{a-b}) \\
&\leq e^{b}( (b-a) + \frac{1}{2} (a-b)^{2}) \leq \frac{3 e}{2} (b-a).
\ee 
\end{proof}

We now apply it:

\begin{proof} [Proof of Lemma \ref{LemmaSmallPerturbation} ]
Fix $x \in \Omega$. We will construct a coupling of $\theta_{1} \sim   \KMHW{n}(x,\cdot) $ and $\theta_{2} \sim  \KSW{A}{n}(x,\cdot)$ and show that $\P[\theta_{1} \neq \theta_{2}] \leq \frac{2}{\sqrt{n}}$.

To construct $\theta_{1}$, let $\ell \sim L(x,\cdot)$ and $u \sim \mathrm{Unif}[0,1]$. Then define the functions 
\be \label{EqCouplingPertJust1}
\psi(x,y; v) &= \frac{1}{n} \log(v \frac{p(x) L(x,y)}{ p(y) L(y,x)}) \\
\Lambda(x,y; (z_{1},\ldots,z_{k})) &= \frac{1}{k} \sum_{i=1}^{k} \log(\frac{p(z_{i} | y)}{p(z_{i} | x)})
\ee 
Finally, we set
\be \label{EqMhRepSimp}
\theta_{1} &= \ell, \qquad \Lambda(x,\ell; (x_{1},\ldots,x_{N})) > \psi(x,\ell; u) \\
\theta_{1} &= x, \qquad \mathrm{otherwise.} \\
\ee 

Next, let $x_{1}^{\ast},\ldots,x_{An}^{\ast}$ be a size-$An$ sample without replacement from $x_{1},\ldots,x_{N}$. Using the same variables $\ell, u$ as in equation \eqref{EqMhRepSimp}, set
\be \label{EqCouplingPertJust2}
\theta_{2} &= \ell, \qquad \Lambda(x,\ell; (x_{1}^{\ast},\ldots,x_{An}^{\ast})) > \psi(x,\ell; u)  \\
\theta_{2} &= x, \qquad \mathrm{otherwise.} \\
\ee 

Let $\Lambda_{1} = \Lambda(x,\ell; (x_{1}^{\ast},\ldots,x_{An}^{\ast}))$, $\Lambda_{2} = \Lambda(x,\ell; (x_{1},\ldots,x_{N}))$ and $\Delta = \frac{1}{n} \log(\frac{p(x) L(x,y)}{ p(y) L(y,x)})$. Under the coupling given by equations \eqref{EqCouplingPertJust1} and \eqref{EqCouplingPertJust2}, 

\be 
\P[ & \theta_{1} \neq \theta_{2}] = \E[ \P[\theta_{1} \neq \theta_{2} | \ell]]\\ 
&=\E[ \int_{0}^{1}\P[ (\{ \Lambda(x,\ell; (x_{1}^{\ast},\ldots,x_{An}^{\ast})) > \psi(x,\ell; \lambda) \} \cap \{ \Lambda(x,\ell; (x_{1},\ldots,x_{N})) < \psi(x,\ell; \lambda)   \}) \\
& \cup ( \{\Lambda(x,\ell; (x_{1}^{\ast},\ldots,x_{An}^{\ast})) < \psi(x,\ell; \lambda) \} \cap  \{ \Lambda(x,\ell; (x_{1},\ldots,x_{N})) > \psi(x,\ell; \lambda)  \}) | \ell] d \lambda] \\
&\leq \E[ \int_{0}^{1} \P[ | \Lambda_{1} -  \Lambda_{2}| \geq |  \Lambda_{2} - \psi(x,\ell; \lambda) | \, |  \ell] d \lambda ] \\
&= \E[ \int_{0}^{1} \P[ | \Lambda_{1} - \Lambda_{2}| \geq |  \Lambda_{2} - \Delta - \frac{1}{n} \log(\lambda) | \, |  \ell] d \lambda ]. \\
\ee
 Using Theorem 1 of \cite{bardenet2015concentration}, this gives 
\be 
\P[ & \theta_{1} \neq \theta_{2}]  \leq \E[ \int_{0}^{1} \P[ | \Lambda_{1} -  \Lambda_{2}| \geq |  \Lambda_{2} - \Delta - \frac{1}{n} \log(\lambda) | \, |  \ell] d \lambda ] \\
&\leq \E[ \int_{0}^{1} \mathrm{exp}(-C^{-2} A n |  \Lambda_{2} - \Delta - \frac{1}{n} \log(\lambda) |^{2} ) d \lambda ] \\
&\leq \E[ \int_{\lambda \, : \, |  \Lambda_{2} - \Delta - \frac{1}{n} \log(\lambda) |^{2} > \frac{C^{2} \log(n)}{An} } \mathrm{exp}(-4 C^{-2} A n \frac{C^{2} \log(n)}{An} ) d \lambda ] 
+ \P[|  \Lambda_{2} - \Delta - \frac{1}{n} \log(u) |^{2} \leq \frac{C^{2} \log(n)}{An} ] \\
&\leq  \mathrm{exp}(-4 \log(n) )  +  \P[|  \Lambda_{2} - \Delta - \frac{1}{n} \log(u) |^{2} \leq \frac{C^{2} \log(n)}{An} ] \\
&\leq n^{-4} + \frac{1}{\sqrt{n}},
\ee 
where the last line follows from Lemma \ref{LemmaNonConcLogs}. Since 

\be 
\| \KSW{A}{n}(x,\cdot) - \KMHW{n}(x,\cdot) \|_{\TV}  \leq \P[\theta_{1} \neq \theta_{2}],
\ee 
this completes the proof.
\end{proof}

\begin{proof} [Proof of Theorem \ref{ThmUnifConv}]
Define 
\be 
\alpha(x,y) &= \min(1, \frac{\phi(y)}{\phi(x)}) \\
\alpha_{n}(x,y) &= \min(1, \frac{\phi_{n}(\frac{y}{s_{n}})}{\phi_{n}(\frac{x}{s_{n}})}),
\ee 
the (rescaled) acceptance probabilities of $K_{n}$ and $K$.

Fix a compact set $\mathcal{C} \subset \Theta$. We calculate

\be 
\lim_{n \rightarrow \infty} & \sup_{x,y \in \mathcal{C}} | \alpha(x,y) - \alpha_{n}(x,y) | = \lim_{n \rightarrow \infty} \sup_{x,y \in \mathcal{C}} | \min(1, \frac{\phi(y)}{\phi(x)}) - \min(1, \frac{\phi_{n}(\frac{y}{s_{n}})}{\phi_{n}(\frac{x}{s_{n}})}) | \\
&\leq \lim_{n \rightarrow \infty} \sup_{x,y \in \mathcal{C}} |  \frac{\phi(y)}{\phi(x)} -  \frac{\phi(y) + (\phi_{n}(\frac{y}{s_{n}}) - \phi(y))}{ \phi(x) + (\phi_{n}(\frac{x}{s_{n}}) - \phi(x))} | \\
&= \lim_{n \rightarrow \infty} \sup_{x,y \in \mathcal{C}} (|  \frac{\phi(y)}{\phi(x)} -  \frac{\phi(y) }{ \phi(x)} | + O(\frac{(\phi_{n}(\frac{y}{s_{n}}) - \phi(y))}{c(\mathcal{X})}) + O(\frac{(\phi_{n}(\frac{x}{s_{n}}) - \phi(x))}{c(\mathcal{X})})) \\
&= 0,
\ee 
where line 3 follows from part 5 of assumption \ref{DefConvDist} and line 4 follows from an application of Theorem 2 of \cite{cule2010theoretical} (whose assumptions are satisfied by parts 1-4 of assumption \ref{DefConvDist}). This limit implies 
\be \label{EqKernelConvCompact}
\lim_{n \rightarrow \infty} \sup_{x \in \mathcal{C}, A \subset \mathcal{C}} | K_{n}(\frac{ x}{s_{n}}, \frac{ A}{s_{n}}) - K(x,A) | = 0.
\ee 
For $M > 0$, let $\mathcal{B}(M) = \{ x \in \Theta \, : \, \| x \| \leq M \}$ and let $\tau(M) = \inf \{ t \geq 0 \, : \, Y_{t} \notin \mathcal{B}(M) \}$. Equation \eqref{EqKernelConvCompact} implies that, for any fixed $M > 0$, there exists a coupling of $\{X_{t}\}_{t \geq 0}$, $\{Y_{t} \}_{t \geq 0}$ that satisfies
\be \label{EqSeqConvCompact}
\lim_{n \rightarrow \infty} \P[(s_{n}X_{0}, \ldots,s_{n} X_{\min(T,\tau(M)-1)}) = (Y_{0}, \ldots,Y_{\min(T,\tau(M)-1)})] = 1.
\ee 
Since $T$ is finite, we have 
\be 
\lim_{M \rightarrow \infty} \P[\tau(M) \leq T] = 0.
\ee 
Combining this with equality \eqref{EqSeqConvCompact} gives 
\be 
\lim_{n \rightarrow \infty} \P[(s_{n}X_{0}, \ldots,s_{n} X_{T}) = (Y_{0}, \ldots,Y_{T})] = 1,
\ee 
completing the proof.
\end{proof}

\subsubsection{Proofs from Section \ref{SecMisc}}

\begin{remark}
The following proof shows convergence of sample paths whenever $n^{2} T^{2} = o(N)$, under essentially no assumptions. Under the stronger assumption that, for each fixed $\theta$, $p(\theta | x )$ is bounded away from 0 and infinity uniformly in $x$, it is possible to show convergence of sample paths under much weaker conditions (\textit{e.g.}  $n T = o(\frac{N}{\log(N)})$). 
\end{remark}

\begin{proof}  [Proof of Lemma \ref{LemmaKernelLimits}]
Fix $N, T$. Let $x_{1},x_{2},\ldots x_{N}$ be the sequence of data points used as parameters in $\KSW{1}{n}$. For $t \geq 0$ and $1 \leq i \leq n$, let $x_{t,i}^{\ast}$ be the subsample chosen in step 4 of Algorithm \ref{AlgGenMhWide} and define $x_{nt + i}' = x_{t,i}^{\ast}$. Let $\sigma_{nt + i} \in \{1,2,\ldots,N\}$ satisfy $x_{nt + i}' = x_{\sigma_{nt + i}}$. Without loss of generality, we can assume that the points $x_{1},\ldots,x_{N}$ are ordered so that $\sigma_{1} = 1$ and
\be \label{EqSigmaStuff}
\sigma_{j+1} \leq \max_{1 \leq i \leq j} \sigma_{i}
\ee 
for all $j \geq 1$.

Next, for $t \geq 0$ and $1 \leq i \leq n$, let $y_{t,i}$ be the $n$ random variables chosen in step 4 of Algorithm \ref{AlgGenMhInfinite} and define $y_{tn + i}' = y_{t,i}$. Finally, we couple the sequences $x_{1},x_{2},\ldots x_{N}$ and $y_{1}',y_{2}',\ldots$ so that $x_{i} = y_{i}'$ for $1 \leq i \leq N$. This is possible, since both sequences are i.i.d. sequences drawn from the same distribution.

Next, we note that if $\sigma_{n(T+1)} = n(T+1)$, then by inequality \eqref{EqSigmaStuff} we also have $\sigma_{j} = j$ for all $1 \leq j \leq n(T+1)$. However, if $\sigma_{j} = j$ for all $1 \leq j \leq n(T+1)$, we also have $x_{t,i}^{\ast} = y_{t,i}$ for all $0 \leq t \leq T$ and $1 \leq i \leq n$. Finally, conditional on  $x_{t,i}^{\ast} = y_{t,i}$ for all $0 \leq t \leq T$ and $1 \leq i \leq n$, it is clear that  $\mathcal{L}(X_{0},\ldots,X_{T}) =\mathcal{L}(Y_{0},\ldots,Y_{T})$. Thus, we have 
\be 
| \P[(X_{0},\ldots,X_{T}) \in A] - \P[(Y_{0},\ldots,Y_{T}) \in A] | &\leq 2 \P[\sigma_{n(T+1)} \neq n(T+1)] \\
&\leq \sum_{t=0}^{n(T+1)} \frac{t}{N} \leq \frac{n^{2} (T+1)^{2}}{N},
\ee 
and so
\be 
\lim_{N \rightarrow \infty} |\P[(X_{0},\ldots,X_{T}) \in A] - \P[(Y_{0},\ldots,Y_{T}) \in A]| \leq \lim_{N \rightarrow \infty} \frac{n^{2} (T+1)^{2}}{N} = 0,
\ee 
completing the proof.
\end{proof}

\bibliographystyle{alpha}
\bibliography{CBMBIB}
\end{document}